\listfiles
\documentclass[12pt,a4paper]{article}
\usepackage{amssymb}
\usepackage{amsmath}
\usepackage[symbol]{footmisc}
\usepackage{amsthm}
\usepackage{amscd}
\usepackage{amstext}
\usepackage{vmargin}
\usepackage{fancyhdr}
\setpapersize{A4}
\pagestyle{plain}
\setmarginsrb{3.0cm}{1.5cm}{3.0cm}{3cm}{1cm}{.5cm}{2cm}{2cm}
%\numberwithin{equation}{section}

\usepackage{amsfonts}
\usepackage{amsthm}
\usepackage[numbers]{natbib}
\usepackage[latin1]{inputenc}
\usepackage[T1]{fontenc}
\usepackage{enumerate}
\usepackage[american]{babel}
\usepackage{graphicx}
\usepackage{bbm}
\usepackage[mathscr]{euscript}
\usepackage{mathrsfs}
\usepackage{stmaryrd}
\usepackage{soul}
\usepackage{hyperref}
\usepackage{xspace}
\usepackage[draft,danish]{fixme}
\usepackage{mathtools}
\usepackage{dsfont}
%\mathtoolsset{showonlyrefs}
\usepackage{url}
\usepackage[ps,all,dvips]{xy}
\SelectTips{cm}{12}
\CompileMatrices

  \makeatletter
 \useshorthands{"}
 \defineshorthand{"-}{\nobreak-\bbl@allowhyphens}
 \makeatother

%%%% Matematik %%%%

% Kommandoen \mathset bruges til at skrive ``standardmængder''. Hvis
% man ikke vil bruge bbm-fonten kan \mathbbm ændres til fx \mathbb
% (den sædvanlige `blackboard bold'-font), eller \mathbf (sædvanlige
% fede typer).

% En del mængder skal man bruge hele tiden. Herunder defineres kommandoer
% for disse.
% Nogle foretrækker at disse kommandoer blot kaldes \P, \N etc. Det
% gør jeg ikke.

    % Span
   		     % Billede
       		 % Identitet
       		 % Generel Lineære (gruppe)
 		 % Lie-algebraen der hører til
   		 % Diffemorfier
   		 % Støtte
       		 % Evaluering
       		 % Projektion

\theoremstyle{plain}
\newtheorem{lemma}{Lemma}[section]

\newtheorem{theorem}[lemma]{Theorem}
\newtheorem{corollary}[lemma]{Corollary}
\theoremstyle{definition}
\newtheorem{example}[lemma]{Example}

\theoremstyle{definition}

\theoremstyle{remark}
\newtheorem{remark}[lemma]{Remark}

% Sørg for at niveauet til og med subsection nummereres
\setcounter{secnumdepth}{4}
\setcounter{tocdepth}{2}

% Kvæl det mellemliggende.
\newcommand{\devnull}[1]{}

\numberwithin{equation}{section}

\title{Equivalent martingale measures for L\'{e}vy-driven moving averages and related processes}
\author{Andreas Basse-O'Connor, Mikkel Slot Nielsen and Jan Pedersen}
\date{\small Department of Mathematics, Aarhus University,\\
 \{basse, mikkel, jan\}@math.au.dk}
%Title: On the variation of stochastic process. 
	
%%%%%%%%%%%%%%%%%%
\begin{document}
\maketitle
  %%%%%%%%%%%%%%%%%%

\begin{abstract}
In the present paper we obtain sufficient conditions for the existence of equivalent martingale measures for L\'{e}vy-driven moving averages and other non-Markovian jump processes. The conditions that we obtain are, under mild assumptions, also necessary. For instance, this is the case for moving averages driven by an $\alpha$-stable L\'{e}vy process with $\alpha \in (1,2]$. 

Our proofs rely on various techniques for showing the martingale property of stochastic exponentials.
\\ \\
\footnotesize \textit{AMS 2010 subject classifications:} 60E07; 60G10; 60G51; 60G57; 60H05
\\ \  \\
\textit{Keywords:} Equivalent martingale measures; Moving averages; L\'{e}vy processes; Stochastic exponentials; Infinite divisibility
\end{abstract}

%%%%%%%%%%%%%%%%%%%%%%%%%%%%%%%%%%%%%%%%%%%%%%%%%%%%%%%%%%%%%%%%%%%%%%%%%%%%%%%%%%%%%%%
\section{Introduction and a main result}\label{introduction}
Absolutely continuous change of measure for stochastic processes is a classical problem in probability theory and there is a vast literature devoted to it. One motivation is the fundamental theorem of asset pricing, see Delbaen and Schachermayer \cite{fundametal_thm_asset}, which relates existence of an equivalent martingale measure to absence of arbitrage (or, more precisely, to the concept of no free lunch with vanishing risk) of a financial market. Several sharp and general conditions for absolutely continuous change of measure are given in \cite{dCriens,kallsenshi,LepingleMemin,ProtterShimbo}, and in case of Markov processes and solutions to stochastic differential equations, strong and explicit conditions are available, see e.g. \cite{DitoYor,Dawson,EberleinJacod,KabanovLiptser,MijaUru} and references therein. 

The main aim of the present paper is to obtain explicit conditions for the existence of an equivalent martingale measure (EMM) for L\'{e}vy-driven moving averages, and these are only Markovian in very special cases. Moving averages are important in various fields, e.g.\ because they are natural to use when modelling long-range dependence (for other applications, see \cite{podolskij2015ambit}). Recalling that Hitsuda's representation theorem characterizes when a Gaussian process admits an EMM, see \cite[Theorem~6.3']{Hida}, and L\'{e}vy-driven moving averages are infinitely divisible processes, our study can also be seen as a contribution to a similar representation theorem for this class.

We will now introduce our framework. Consider a probability space $(\Omega, \mathscr{F}, \mathbb{P})$, on which a two-sided L\'{e}vy process $L=(L_t)_{t\in \mathbb{R}}$, with $L_0=0$, is defined. Let $T>0$ be a fixed time horizon and let $(\mathscr{F}_t)_{t\in [0,T]}$ be the smallest filtration that satisfies the usual conditions, see \cite[Definition~1.3~(Ch.~I)]{JS}, and such that
\begin{align*}
\sigma \left(L_s\ :\ -\infty <s\leq t \right)\subseteq \mathscr{F}_t
\end{align*}
for $t \in [0,T]$. Define a stationary $L$-driven moving average $(X_t)_{t\in [0,T]}$ by
\begin{align}\label{movingAverage}
X_t = \int_{-\infty}^t \varphi (t-s) dL_s, \quad t \in [0,T],
\end{align}
for a given function $\varphi:\mathbb{R}_+\to \mathbb{R}$, such that the integral in (\ref{movingAverage}) is well-defined. To avoid trivial cases, suppose further that the set of $t\geq 0$ with $\varphi (t) \neq 0$ is not a Lebesgue null set. Our aim is to find explicit conditions that ensure the existence of a probability measure $\mathbb{Q}$ on $(\Omega,\mathscr{F})$, equivalent to $\mathbb{P}$, under which $(X_t)_{t\in [0,T]}$ is a local martingale. Furthermore, we are interested in the structure of $(X_t)_{t \in [0,T]}$ under $\mathbb{Q}$. 

A necessary condition for a process to admit an EMM is that it is a semimartingale, and this property is (under mild assumptions on the L\'{e}vy measure) characterized for $L$-driven moving averages in Basse-O'Connor and Rosinski \cite{abocjr} and Knight \cite{Knight}. Other relevant references in this direction include \cite{abocjp,Patrick}. In the case where $L$ is Gaussian, and relying on Knight \cite[Theorem~6.5]{Knight}, Cheridito \cite[Theorem~4.5]{Patrick} gives a complete characterization of the $L$-driven moving averages that admit an EMM:
\begin{theorem}[P. Cheridito]\label{GaussianEMM}
Suppose that $L$ is a Brownian motion. Then the moving average $(X_t)_{t\in [0,T]}$ defined in (\ref{movingAverage}) admits an EMM if and only if $\varphi (0) \neq 0$ and $\varphi$ is absolutely continuous with a density $\varphi'$ satisfying $\varphi' \in L^2 (\mathbb{R}_+)$.
\end{theorem}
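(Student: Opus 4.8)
The plan is to reduce the problem to an explicit Girsanov transform built on top of Knight's semimartingale characterization. Throughout I write $B=L$ for the two-sided Brownian motion. Since any process admitting an EMM must be a semimartingale, I would first invoke Knight's Theorem~6.5 \cite{Knight} (as used in \cite[Theorem~4.5]{Patrick}): the moving average $X$ is an $(\mathscr{F}_t)$-semimartingale precisely when $\varphi$ is absolutely continuous on $\mathbb{R}_+$ with $\varphi'\in L^2(\mathbb{R}_+)$, and in that case I would derive the canonical decomposition
\begin{align*}
X_t = X_0 + \varphi(0)\,B_t + \int_0^t A_r\,dr, \qquad A_r := \int_{-\infty}^r \varphi'(r-s)\,dB_s,
\end{align*}
by writing $\varphi(r)=\varphi(0)+\int_0^r\varphi'(v)\,dv$ and applying a stochastic Fubini theorem. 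The drift process $A$ is a stationary centered Gaussian process whose defining integral converges exactly because $\varphi'\in L^2(\mathbb{R}_+)$, and the continuous local martingale part of $X$ equals $\varphi(0)B$.

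For necessity I would argue at the level of quadratic variation. Knight first gives that $X$ is a semimartingale only if $\varphi$ is absolutely continuous with $\varphi'\in L^2(\mathbb{R}_+)$, putting the decomposition above at my disposal. Then, since $[X]_t=\varphi(0)^2 t$ and quadratic variation is invariant under an equivalent change of measure, the case $\varphi(0)=0$ would make $X$ a continuous local martingale of vanishing quadratic variation under any $\mathbb{Q}\sim\mathbb{P}$, hence a.s.\ constant. This contradicts the nondegeneracy of the increments of $X$ guaranteed by $\{\varphi\neq 0\}$ not being Lebesgue null, so $\varphi(0)\neq 0$ is forced.

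For sufficiency, assuming $\varphi(0)\neq 0$ and the $L^2$ condition, I would set $\theta_r:=-A_r/\varphi(0)$ and define $\mathbb{Q}$ through the density $\mathcal{E}\big(\int_0^\cdot\theta_r\,dB_r\big)_T$. By Girsanov's theorem $\tilde B_t:=B_t-\int_0^t\theta_r\,dr$ is then a $\mathbb{Q}$-Brownian motion and
\begin{align*}
X_t = X_0 + \varphi(0)\,\tilde B_t + \int_0^t\big(\varphi(0)\theta_r + A_r\big)\,dr = X_0 + \varphi(0)\,\tilde B_t,
\end{align*}
so $X$ is a $\mathbb{Q}$-martingale, namely a scaled Brownian motion shifted by the $\mathscr{F}_0$-measurable variable $X_0$. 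The hard part will be verifying that the stochastic exponential above is a genuine martingale on $[0,T]$, so that $\mathbb{Q}$ is a true probability measure equivalent to $\mathbb{P}$; this is exactly where the paper's techniques for the martingale property of stochastic exponentials enter. My approach would be a localized Novikov argument: since $A$ is stationary Gaussian with $\mathbb{E}[A_r^2]=\|\varphi'\|_{L^2}^2<\infty$, the variable $\int_{t_i}^{t_{i+1}}A_r^2\,dr$ lies in the second Wiener chaos, and the exponential moment $\mathbb{E}\exp\big(\tfrac{1}{2\varphi(0)^2}\int_{t_i}^{t_{i+1}}A_r^2\,dr\big)$ becomes finite once the mesh is small enough to push the top eigenvalue of the associated covariance operator below $\varphi(0)^2$. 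Checking Novikov's condition on each block of a sufficiently fine partition of $[0,T]$ and concatenating the resulting measure changes would then deliver the required EMM.
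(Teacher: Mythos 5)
Your proposal is correct and follows essentially the same route as the proof the paper relies on: the paper does not reprove Theorem~\ref{GaussianEMM} but cites Cheridito \cite[Theorem~4.5]{Patrick}, whose argument is precisely Knight's semimartingale characterization \cite[Theorem~6.5]{Knight} combined with a Girsanov change of drift justified by a local Novikov condition --- exactly your scheme, including the necessity argument via invariance of quadratic variation. Your small-mesh exponential-moment bound (top eigenvalue of the covariance operator dominated by its trace $\|\varphi'\|_{L^2}^2\,(t_{i+1}-t_i)$) is the standard way to close that local Novikov step, and the paper itself confirms this is the right reading by remarking that it is precisely this Novikov-based technique that breaks down in the non-Gaussian case.
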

Despite that, in general, the existence of an EMM is a stronger condition than being a semimartingale, Theorem~\ref{GaussianEMM} shows (together with \cite[Theorem~6.5]{Knight}) that for Gaussian moving averages of the form (\ref{movingAverage}), the two concepts are equivalent when $\varphi(0)\neq 0$. If $L$ has a non-trivial L\'{e}vy measure, explicit conditions for the existence of an EMM have, to the best of our knowledge, not been provided. It would be natural to try to obtain such conditions using the same techniques as in Theorem~\ref{GaussianEMM}. However, these techniques are based on a local version of the Novikov condition, which will not be fulfilled as soon as the driving L\'{e}vy process is non-Gaussian. This is an implication of the fact that $\int_\mathbb{R} e^{\varepsilon x^2} \varpi (dx) = \infty$ for any $\varepsilon >0$ and any non-Gaussian infinitely divisible distribution $\varpi$, see \cite[Theorem~26.1]{Sato}. Consequently, to prove the existence of an EMM in a non-Gaussian setting, a completely different approach has to be used. An implication of our results is the non-Gaussian counterpart of Theorem~\ref{GaussianEMM} which is formulated in Theorem~\ref{nonGaussianEMM} below. In this formulation, $c \geq 0$ denotes the Gaussian component of $L$ and $F$ is its L\'{e}vy measure.

\begin{theorem}\label{nonGaussianEMM}
Let $(X_t)_{t \in [0,T]}$ be a L\'{e}vy-driven moving average given by (\ref{movingAverage}). Suppose that $L$ has sample paths of locally unbounded variation, either $\int_{\{\vert x \vert >1\}}x^2F(dx) < \infty$ or $x \mapsto F((-x,x)^c)$ is regularly varying at $\infty$ of index $\beta \in [-2,-1)$, and
\begin{flalign}\label{fSupp}
\text{the support of } F \text{ is unbounded on both } (-\infty,0) \text{ and } (0,\infty).
\end{flalign}
Then $(X_t)_{t \in [0,T]}$ admits an EMM if and only if $\varphi (0)\neq 0$ and $\varphi$ is absolutely continuous with a density $\varphi'$ satisfying
\begin{align}\label{densityCondition}
c\int_0^\infty \varphi'(t)^2dt + \int_0^\infty \int_\mathbb{R} \vert x \varphi' (t)\vert \wedge (x \varphi'(t))^2F(dx)dt < \infty.
\end{align}
If instead of (\ref{fSupp}), we have that the support of $F$ is contained in a compact set, but is not contained in $(-\infty,0)$ nor $(0,\infty)$, then the remaining assumptions are sufficient to ensure that $(X_t)_{t\in [0,T]}$ admits an EMM provided that $\varphi'$ is bounded.
\end{theorem}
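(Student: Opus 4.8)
The plan is to reduce the existence of an EMM to a Girsanov-type change of measure for the driving process $L$ and to control the resulting density process through its stochastic exponential. First I would record the semimartingale structure of $X$. Under the standing assumption of locally unbounded variation, the semimartingale characterizations of Knight \cite{Knight} and Basse-O'Connor and Rosinski \cite{abocjr} force $\varphi$ to be absolutely continuous with $\varphi(0)\neq 0$, and (formally differentiating $X_t=\int_{-\infty}^t\varphi(t-s)\,dL_s$ in $t$) yield the decomposition
\begin{equation*}
X_t = X_0 + \varphi(0)L_t + \int_0^t Y_s\,ds, \qquad Y_s = \int_{-\infty}^s \varphi'(s-u)\,dL_u,
\end{equation*}
so that the only obstruction to $X$ being a local martingale is the absolutely continuous drift carried by the memory process $Y$. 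This already explains the necessity of $\varphi(0)\neq 0$ and of the absolute continuity of $\varphi$.

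For sufficiency I would seek $\mathbb{Q}\sim\mathbb{P}$ whose density process is a stochastic exponential $\mathcal{E}(G)$ with
\begin{equation*}
G_t = \int_0^t \beta_s\,dW_s + \int_0^t\!\!\int_{\mathbb{R}}(\mathcal{Y}_s(x)-1)\,(\mu-\nu)(ds,dx),
\end{equation*}
where $W$ and $(\mu,\nu)$ are the Gaussian and jump parts of $L$. By Girsanov's theorem the $\mathbb{Q}$-drift rate of $X$ is then $\varphi(0)\bigl(c\beta_t+\int_{\mathbb{R}} x(\mathcal{Y}_t(x)-1)F(dx)\bigr)+Y_t$, up to truncation terms, and the task is to choose the predictable parameters $(\beta_t,\mathcal{Y}_t)$ so that this vanishes for a.e.\ $t$. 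I would use this single scalar constraint to fix an explicit, $Y_t$-dependent tilt --- a Gaussian shift of size $\propto\varphi'$ together with an exponential/linear tilt of the L\'evy measure of intensity $\propto\varphi'(t)$ --- chosen so that its Hellinger cost is of the order $\int_{\mathbb{R}}\lvert x\varphi'(t)\rvert\wedge(x\varphi'(t))^2\,F(dx)$. Condition (\ref{densityCondition}) is then precisely the statement that the total cost accumulated over $t$ is finite.

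I expect the main obstacle to be proving that $\mathcal{E}(G)$ is a \emph{true} martingale, so that $\mathbb{Q}$ is a genuine probability measure. Since the non-Gaussian setting defeats every Novikov-type criterion (by \cite[Theorem~26.1]{Sato}, as noted in the introduction), I would instead invoke characteristic-based sufficient conditions for the martingale property of stochastic exponentials, such as those of L\'epingle--M\'emin and Kallsen--Shiryaev \cite{LepingleMemin,kallsenshi}, verifying that the predictable exponential compensator built from $(\beta_t,\mathcal{Y}_t)$ is controlled by (\ref{densityCondition}). This is where the bulk of the work lies, and it is presumably the reason the paper advertises ``various techniques for showing the martingale property of stochastic exponentials.''

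For necessity, given an EMM I would translate equivalence of the two laws of the infinitely divisible process into a constraint on the admissible tilts via the equivalence theory for infinitely divisible distributions, and then show that no tilt cheaper than the one above can produce the required drift, forcing the full integral in (\ref{densityCondition}) to be finite; the second-moment/regular-variation dichotomy enters when estimating the tail contribution to this cost. Here the unbounded-support assumption (\ref{fSupp}) on both half-lines is essential, as it prevents compensating the drift using only a bounded band of jump sizes. Finally, for the compact-support variant I would reuse the sufficiency construction verbatim, using boundedness of $\varphi'$ to keep the jump tilt $\mathcal{Y}_t$ in a range where $\mathcal{E}(G)$ remains a martingale, while dropping the necessity argument, since with $F$ supported compactly on both half-lines the drift can be produced at finite cost without (\ref{densityCondition}) being sharp.
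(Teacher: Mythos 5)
Your proposal contains a genuine gap at its core step, and it also misidentifies the role of condition (\ref{densityCondition}). On the second point first: under the stated tail assumptions (second moments of the big jumps, or regular variation of index $\beta\in[-2,-1)$), condition (\ref{densityCondition}) together with absolute continuity of $\varphi$ is \emph{exactly} the characterization from \cite[Theorem~4.1 and Corollary~4.8]{abocjr} of when $(X_t)_{t\in[0,T]}$ is a semimartingale with decomposition (\ref{canonicalMA}). Hence necessity is immediate: an EMM makes $X$ a $\mathbb{Q}$-semimartingale, so $X$ is a $\mathbb{P}$-semimartingale, and \cite{abocjr} yields absolute continuity and (\ref{densityCondition}); $\varphi(0)\neq 0$ follows because otherwise $X$ would be a continuous finite-variation local martingale under $\mathbb{Q}$, hence constant, forcing $\varphi\equiv 0$. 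Your plan to prove necessity by showing that ``no tilt cheaper than the one above can produce the required drift'' presupposes a structure theorem for \emph{all} equivalent measures and is neither needed nor likely to be tractable; relatedly, your reading of (\ref{densityCondition}) as a ``Hellinger cost'' of the measure change is not how it enters the paper --- it enters only to guarantee the semimartingale decomposition and the existence of the predictable drift process $Y$.

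The more serious gap is in sufficiency. The drift you must remove is $Y_t=\int_{-\infty}^t\varphi'(t-s)\,dL_s$, which is infinitely divisible with a L\'evy measure of \emph{unbounded} support whenever $F$ has unbounded support. Any tilt $\mathcal{Y}_t$ producing drift $-Y_t$ must therefore be of size proportional to $|Y_t|$ somewhere, and the L\'epingle--M\'emin/Kallsen--Shiryaev route you propose then requires exponential moments of the compensator, i.e.\ essentially $\mathbb{E}\bigl[e^{c\,|Y_t|\log(1+|Y_t|)}\bigr]<\infty$; this is \emph{infinite} for every $c>0$ by \cite[Theorem~26.1]{Sato}, as the paper's Example~\ref{LMrelax} demonstrates explicitly --- so your verification fails precisely in the main case (\ref{fSupp}). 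The paper circumvents this via hypothesis (h2) of Theorem~\ref{Thm1}: one tilts \emph{only} the jumps in $[-a,a]^c$ by an intensity-preserving two-point density $f_\zeta$ satisfying (\ref{alphaH2}), so that $|\alpha-1|\ast\nu_T\le 2F([-a,a]^c)T$ uniformly in the (arbitrarily large) drift being absorbed --- this is where unboundedness of the support on both half-lines is used --- and the martingale property of $\mathscr{E}((\alpha-1)\ast(\mu-\nu))=\prod_{n\le N_t}\alpha(T_n,Z_n)$ is proved by a bare-hands argument conditioning on the Poisson jump times (Lemma~\ref{nuSpec}), not by any exponential-compensator criterion. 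You also have the two regimes backwards: the L\'epingle--M\'emin-type argument (the paper's Theorem~\ref{Thm2}) is what works in the \emph{compact-support} variant, where boundedness of $\varphi'$ and compactness of $\operatorname{supp}F$ make the L\'evy measure of $Y_t$ compactly supported so that the required moment bound (\ref{finiteExpect}) holds via hypothesis (h1); reusing your unbounded-support construction ``verbatim'' there is impossible in the other direction as well, since with compactly supported $F$ there are no far-out jumps with which to absorb an unbounded drift at bounded intensity.
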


If $L$ is a symmetric $\alpha$-stable L\'{e}vy process with $\alpha \in (1,2)$, $x\mapsto F((-x,x)^c)$ is regularly varying of index $-\alpha$ and condition (\ref{densityCondition}) is equivalent to $\varphi' \in L^\alpha (\mathbb{R}_+)$ (see \cite[Example~4.9]{abocjr}). Thus, since condition (\ref{fSupp}) of Theorem~\ref{nonGaussianEMM} is trivially satisfied as well, we get directly the following natural extension of Theorem~\ref{GaussianEMM}:
\begin{corollary}\label{extendGaussianEMM} Suppose that $L$ is a symmetric $\alpha$-stable L\'{e}vy process with index $\alpha \in (1,2]$. Then the moving average $(X_t)_{t\in [0,T]}$ defined in (\ref{movingAverage}) admits an EMM if and only if $\varphi (0)\neq 0$ and $\varphi$ is absolutely continuous with a density $\varphi'$ satisfying $\varphi' \in L^\alpha (\mathbb{R}_+)$.
\end{corollary}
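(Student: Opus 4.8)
The plan is to derive Corollary~\ref{extendGaussianEMM} directly from Theorem~\ref{nonGaussianEMM}, treating the two ranges of $\alpha$ separately. For $\alpha \in (1,2)$, the strategy is first to verify that a symmetric $\alpha$-stable L\'{e}vy process satisfies all of the structural hypotheses of Theorem~\ref{nonGaussianEMM}. Since such a process is purely non-Gaussian with $c=0$ and has a L\'{e}vy measure of the form $F(dx) = C\vert x\vert^{-1-\alpha}\,dx$ (with equal weights on the two half-lines by symmetry), the tail $x\mapsto F((-x,x)^c)$ is a constant multiple of $x^{-\alpha}$ and is therefore regularly varying at $\infty$ of index $-\alpha \in [-2,-1)$. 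The symmetry and the explicit density also make condition (\ref{fSupp}) immediate, as the support of $F$ is all of $\mathbb{R}\setminus\{0\}$, hence unbounded on both half-lines. Finally, for $\alpha \in (1,2)$ the $\alpha$-stable process has sample paths of locally unbounded variation, so the local unboundedness hypothesis holds as well.

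With the hypotheses in place, the next step is to translate the abstract integrability condition (\ref{densityCondition}) into the clean statement $\varphi' \in L^\alpha(\mathbb{R}_+)$. Here I would invoke \cite[Example~4.9]{abocjr}, as the excerpt suggests: with $c=0$, condition (\ref{densityCondition}) reduces to
\begin{equation*}
\int_0^\infty \int_\mathbb{R} \bigl(\vert x\varphi'(t)\vert \wedge (x\varphi'(t))^2\bigr)\,F(dx)\,dt < \infty,
\end{equation*}
and a direct computation with $F(dx)=C\vert x\vert^{-1-\alpha}\,dx$ shows that the inner integral is, up to a finite positive constant, equal to $\vert\varphi'(t)\vert^\alpha$; integrating over $t$ then gives that (\ref{densityCondition}) holds if and only if $\varphi' \in L^\alpha(\mathbb{R}_+)$. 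The key scaling observation is that $\int_\mathbb{R}\bigl(\vert ax\vert \wedge (ax)^2\bigr)\vert x\vert^{-1-\alpha}\,dx$ scales like $\vert a\vert^\alpha$ under $a\mapsto$ the substitution $x\mapsto x/\vert a\vert$, which is exactly why the $\alpha$-stable case collapses to an $L^\alpha$ condition. Combining this equivalence with the ``if and only if'' of Theorem~\ref{nonGaussianEMM} yields the corollary for $\alpha\in(1,2)$.

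The remaining case $\alpha=2$ is precisely the Gaussian case, where a symmetric $2$-stable process is (a scalar multiple of) Brownian motion. Here the corollary is not obtained from Theorem~\ref{nonGaussianEMM}, whose hypotheses exclude the purely Gaussian situation, but instead follows immediately from Theorem~\ref{GaussianEMM} of Cheridito: the condition $\varphi'\in L^\alpha(\mathbb{R}_+)$ becomes $\varphi'\in L^2(\mathbb{R}_+)$, which is exactly the density condition there. Thus the two endpoints of the statement are covered by two different theorems, and the corollary is the natural unification of them under the single hypothesis $\varphi'\in L^\alpha(\mathbb{R}_+)$.

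I expect no serious obstacle in this argument, since it is essentially a verification that an explicit, highly symmetric example meets the general hypotheses already established. The only step requiring any genuine computation is the reduction of (\ref{densityCondition}) to the $L^\alpha$ norm, and even this is deferred to the cited \cite[Example~4.9]{abocjr}; the mild care needed is simply to confirm the constants are finite and strictly positive so that the equivalence (rather than merely one implication) is preserved, and to keep the Gaussian endpoint $\alpha=2$ conceptually separate since it is handled by Cheridito's theorem rather than by the non-Gaussian result.
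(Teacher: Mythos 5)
Your proposal is correct and takes essentially the same route as the paper: the paper also derives the corollary directly from Theorem~\ref{nonGaussianEMM} by observing that the symmetric stable tail $x\mapsto F((-x,x)^c)$ is regularly varying of index $-\alpha$, that (\ref{fSupp}) holds trivially, and that (\ref{densityCondition}) is equivalent to $\varphi'\in L^\alpha(\mathbb{R}_+)$ by \cite[Example~4.9]{abocjr}, with the endpoint $\alpha=2$ being Cheridito's Theorem~\ref{GaussianEMM}. Your explicit scaling computation and the check of locally unbounded variation simply spell out steps the paper leaves implicit.
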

A result similar to Corollary~\ref{extendGaussianEMM} can be formulated when $L$ is a symmetric tempered stable L\'{e}vy process that is, when the L\'{e}vy measure takes the form $F(dx)=\eta\vert x \vert^{-\alpha-1}e^{-\lambda \vert x \vert}$ for $\eta,\lambda >0$ and $\alpha \in [1,2)$. Indeed, since $\int_\mathbb{R}x^2F(dx) < \infty$ and (\ref{fSupp}) is satisfied in this setup, there exists an EMM $\mathbb{Q}$ for $(X_t)_{t\in [0,T]}$ if and only if $\varphi (0)\neq 0$ and $\int_0^\infty \vert \varphi'(t) \vert^\alpha \wedge \varphi' (t)^2 dt <\infty$ (as the latter condition is equivalent to (\ref{densityCondition}) cf. \cite[Example~4.9]{abocjr}). 

It may be stressed that the Gaussian case considered in Theorem~\ref{GaussianEMM} and the non-Gaussian case considered in Theorem~\ref{nonGaussianEMM} are of fundamental different structure. Indeed, when $L$ is a Brownian motion, one can apply the martingale representation theorem to show that the EMM is unique, and by invariance of the quadratic variation under equivalent change of measure, $(X_t - X_0)_{t\in [0,T]}$ is a Brownian motion under the EMM (one may need a semimartingale decomposition of $(X_t)_{t \in [0,T]}$, see e.g. (\ref{canonicalMA})). If $L$ is a general L\'{e}vy process, Theorem~\ref{Thm1} and Remark~\ref{charAnalysis} in Section~\ref{mainRes} show that the EMM will not be unique, and $(X_t-X_0)_{t \in [0,T]}$ and $(L_t)_{t\in [0,T]}$ will not be L\'{e}vy processes under any of our constructed EMMs. 

Besides the moving average framework we will also, for a general filtration $(\mathscr{F}_t)_{t\in [0,T]}$, study EMMs for semimartingales of the form
\begin{align}\label{processOfInterest}
X_t = L_t + \int_0^t Y_s ds, \quad t \in [0,T],
\end{align}
for a given $(\mathscr{F}_t)_{t\in [0,T]}$-L\'{e}vy process $(L_t)_{t\in [0,T]}$ and a predictable process $(Y_t)_{t\in [0,T]}$ such that $t\mapsto Y_t$ is integrable on $[0,T]$ almost surely. This study turns out to be useful in order to deduce results for moving averages.

We will shortly present the outline of this paper. Section~\ref{mainRes} presents Theorem~\ref{Thm1}, which concerns precise and tractable conditions on $(L_t)_{t\in [0,T]}$ and $(Y_t)_{t\in [0,T]}$ ensuring the existence of an EMM for $(X_t)_{t\in [0,T]}$ in (\ref{processOfInterest}). An implication of this result is Theorem~\ref{nonGaussianEMM} and in turn Corollary~\ref{extendGaussianEMM}. Theorem~\ref{Thm1} is followed by a predictable criterion ensuring the martingale property of stochastic exponentials, Theorem~\ref{Thm2}, and this is based on a general approach of Lépingle and Mémin \cite{LepingleMemin}. Due to the nature of this criterion, it can be used for other purposes than verifying the existence of EMMs for $(X_t)_{t\in [0,T]}$ and thus, the result is of independent interest. Both Theorem~\ref{Thm1} and Theorem~\ref{Thm2} are accompanied by remarks and examples that illustrate their applications. Subsequently, Section~\ref{prel} recalls the most fundamental and important concepts in relation to change of measure and integrals with respect to random measures. These concepts will be used throughout Section~\ref{proofs} which is devoted to prove the statements of Section~\ref{mainRes}. During Section~\ref{proofs} one will also find additional remarks and examples of a more technical nature.
\section{Further main results}\label{mainRes} 
Let $L=(L_t)_{t \in [0,T]}$ be an integrable $(\mathscr{F}_t)_{t \in [0,T]}$-L\'{e}vy process with triplet $(c, F,b^h)$, relative to some truncation function $h:\mathbb{R}\to \mathbb{R}$. Here $b^h \in \mathbb{R}$ is the drift component, $c\geq 0$ is the Gaussian component, and $F$ is the L\'{e}vy measure. Set $\xi = \int_\mathbb{R} (x - h(x))F(dx) + b^h$ so that $\mathbb{E}[L_t] = \xi t$ for $t \in [0,T]$. We denote by $\mu$ the jump measure of $L$ and by $\nu (dt,dx) = F(dx)dt$ its compensator. It will be assumed that $L$ has both positive and negative jumps such that we can choose $a,b>0$ with
\begin{align}\label{truncation}
\min\{F((-b,-a)),F((a,b))\} >0.
\end{align}
In Theorem~\ref{Thm1} we will give conditions for the existence of an EMM $\mathbb{Q}$ for $(X_t)_{t\in [0,T]}$ given by 
\begin{align}\label{processOfInterest2}
X_t = L_t + \int_0^t Y_s ds, \quad t \in [0,T],
\end{align}
where $(Y_t)_{t\in [0,T]}$ is a predictable process and $t \mapsto Y_t$ is Lebesgue integrable on $[0,T]$ almost surely. We will also provide the semimartingale (differential) characteristics of $(X_t)_{t\in [0,T]}$ under $\mathbb{Q}$ (these are defined in \cite[Ch.~II]{JS} and can be found in Section~\ref{prel} as well).

\begin{theorem}\label{Thm1} Let $(X_t)_{t \in [0,T]}$ be given by (\ref{processOfInterest2}). Consider the hypotheses: 
\begin{enumerate}[(h1)]
\item The collection $(Y_t)_{t \in [0,T]}$ is tight and $Y_t$ is infinitely divisible with a L\'{e}vy measure supported in $[-K,K]$ for all $t\in [0,T]$ and some $K>0$. 
\item The L\'{e}vy measure of $L$ has unbounded support on both $(-\infty,0)$ and $(0,\infty)$. 
\end{enumerate}
If either (h1) or (h2) holds, there exists an EMM $\mathbb{Q}$ on $(\Omega, \mathscr{F})$ for $(X_t)_{t \in [0,T]}$ such that $d\mathbb{Q}=\mathscr{E}((\alpha -1)\ast (\mu - \nu))_Td \mathbb{P}$ for some predictable function $\alpha:\Omega \times [0,T]\times \mathbb{R}\to (0,\infty)$, and the differential characteristics of $(X_t)_{t\in [0,T]}$ relative to $h$ under $\mathbb{Q}$ are of the form
\begin{align}\label{characteristics}
\biggr(c, \alpha(t,x)F(dx),b^h + Y_t + \int_\mathbb{R}(\alpha(t,x) - 1)h(x)F(dx)\biggr)
\end{align}
for $t \in [0,T]$.

For any $a,b>0$ that meet (\ref{truncation}), depending on the hypothesis, $\mathbb{Q}$ can be chosen such that:
\begin{enumerate}[(h1)]
\item The function $\alpha$ is explicitly given by
\begin{align}\label{alphaH1}
\alpha(t,x) &= 1+ \frac{(Y_t +\xi)^- x}{\sigma^2_+} \mathds{1}_{(a,b)}(x) - \frac{(Y_t + \xi)^+ x}{\sigma_-^2}\mathds{1}_{(a,b)} (-x)
\end{align}
where $\sigma^2_{\pm} =\int_\mathbb{R} y^2 \mathds{1}_{(a,b)}(\pm y) F(dy)$.
\item With $\lambda = F([-a,a]^c)$, the relations
\begin{align}\label{alphaH2}
\int_{[-a,a]^c} \alpha(t,x) F(dx)=\lambda \quad \text{and} \quad \int_{[-a,a]^c} x\alpha (t,x) F(dx) = - (Y_t + b^h)
\end{align}
hold pointwise, and $\alpha (t,x)=1$ whenever $\vert x\vert  \leq a$.
\end{enumerate}
\end{theorem}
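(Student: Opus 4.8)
The plan is to realise $\mathbb{Q}$ through a Girsanov transform acting on the jump measure and to reduce the local martingale property of $X$ to a single pointwise equation in $\alpha$. If $Z:=\mathscr{E}((\alpha-1)\ast(\mu-\nu))$ is a genuine martingale with $Z_0=1$, then $d\mathbb{Q}=Z_T\,d\mathbb{P}$ defines a probability measure, and by Girsanov's theorem for random measures (recalled in Section~\ref{prel}) the $\mathbb{P}$-compensator $\nu(dt,dx)=F(dx)dt$ of $\mu$ becomes $\alpha(t,x)F(dx)dt$ under $\mathbb{Q}$, while the continuous martingale part of $L$ is unaffected (the density is purely discontinuous). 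Writing the canonical decomposition $X_t=L^c_t+x\ast(\mu-\nu)_t+\xi t+\int_0^t Y_s\,ds$, which is valid since $L$ is integrable, and re-compensating $x\ast(\mu-\nu)$ under $\mathbb{Q}$, the predictable finite variation part of $X$ becomes $\int_0^t\bigl(\xi+Y_s+\int_\mathbb{R}x(\alpha(s,x)-1)F(dx)\bigr)ds$. Hence $X$ is a $\mathbb{Q}$-local martingale precisely when
\begin{align*}
\int_\mathbb{R}x\,(\alpha(t,x)-1)\,F(dx)=-(Y_t+\xi)\qquad\text{for a.e.\ }t,
\end{align*}
and once $\alpha>0$ is secured this identity, together with the substitution of $\alpha\nu$ for the compensator, also yields the stated characteristics (\ref{characteristics}).

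Second, I would construct $\alpha$ and verify this equation together with positivity. Under (h1) I take the explicit $\alpha$ in (\ref{alphaH1}): since $x>0$ on $(a,b)$ and $(Y_t+\xi)^\pm\ge 0$, each correction term has the right sign so that $\alpha\ge 1>0$ everywhere, and the normalisations $\sigma^2_\pm=\int y^2\mathds{1}_{(a,b)}(\pm y)F(dy)$ make the two contributions collapse to $(Y_t+\xi)^-\!-(Y_t+\xi)^+=-(Y_t+\xi)$, giving the drift equation. Under (h2) I instead solve the two constraints (\ref{alphaH2}): I seek a positive density $\alpha$ on $[-a,a]^c$, equal to $1$ on $[-a,a]$, with $\int_{[-a,a]^c}\alpha F=\lambda$ and prescribed first moment $-(Y_t+b^h)$. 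Because the support of $F$ is unbounded on both half-lines, the target mean can be attained while keeping the total mass $\lambda$ fixed, by transferring a small amount of mass arbitrarily far out to the right or to the left; this is exactly where (h2) enters, and the resulting $\alpha$ is a measurable, hence predictable, function of the predictable quantity $Y_t+b^h$. Since the local martingale property is independent of the truncation, I may verify the drift equation with the convenient choice $h(x)=x\mathds{1}_{[-a,a]}(x)$, for which $\int(x-h)F=\int_{[-a,a]^c}xF$, turning (\ref{alphaH2}) into the displayed equation.

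The main obstacle is upgrading $Z=\mathscr{E}((\alpha-1)\ast(\mu-\nu))$ from a nonnegative local martingale to a true martingale on $[0,T]$, which is what makes $\mathbb{Q}$ a probability measure; equivalence $\mathbb{Q}\sim\mathbb{P}$, rather than mere absolute continuity, then follows from $\alpha>0$. This is precisely the role of the predictable criterion Theorem~\ref{Thm2}, and the two hypotheses are tailored to supply its assumptions. Under (h1) the correction $\alpha-1$ is supported on the bounded annulus $\{a<|x|<b\}$ and is proportional to $(Y_t+\xi)^\pm$; the assumption that $(Y_t)_{t\in[0,T]}$ is tight and that each $Y_t$ is infinitely divisible with L\'{e}vy measure supported in $[-K,K]$ furnishes uniform, in fact exponential, integrability of $Y_t$, which I would use to control the exponential compensator appearing in Theorem~\ref{Thm2} uniformly in $t$. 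Under (h2) the modification acts only on the large jumps: $\alpha=1$ near the origin while the affected region $[-a,a]^c$ carries finite $F$-mass $\lambda$, so $(\alpha-1)\ast(\mu-\nu)$ is a finite-activity compensated jump process with $F$-integrable jumps, and I would verify the hypotheses of Theorem~\ref{Thm2} directly from this structure, with no moment assumption on $Y$.

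Finally, once $Z$ is shown to be a martingale, $\mathbb{E}[Z_T]=1$ and $\mathbb{Q}$ is a well-defined probability measure equivalent to $\mathbb{P}$; Girsanov then confirms both that $X$ is a $\mathbb{Q}$-local martingale and that its differential characteristics relative to $h$ are (\ref{characteristics}). I expect the delicate points to sit entirely in the martingale step --- specifically, matching the exponential-compensator bound of Theorem~\ref{Thm2} to the tightness and bounded-support input in (h1), and making the moment-problem construction in (h2) measurable and compatible with that same criterion. The drift computation and the positivity of $\alpha$ are routine by comparison.
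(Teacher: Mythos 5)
Your general scheme (Girsanov for the jump measure, reduction of the $\mathbb{Q}$-local-martingale property of $X$ to the drift equation $\int_\mathbb{R}x(\alpha(t,x)-1)F(dx)=-(Y_t+\xi)$, equivalence from strict positivity of $\alpha$) and your treatment of (h1) are correct and coincide with the paper's proof: there too one takes $\alpha$ from (\ref{alphaH1}), notes $\alpha\geq 1$, dominates $\alpha(t,x)-1\leq \vert P_t\vert g(x)$ with $P_t=Y_t+\xi$ and $g(x)=C\vert x\vert \mathds{1}_{(a,b)}(\vert x\vert)$, and invokes Theorem~\ref{Thm2}.

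The genuine gap is case (h2). You propose to ``verify the hypotheses of Theorem~\ref{Thm2} directly from this structure, with no moment assumption on $Y$,'' but Theorem~\ref{Thm2} cannot be applied there, for a structural reason. Its hypothesis (a) requires a predictable dominating process $(P_t)_{t\in[0,T]}$ with $\alpha(t,x)-1\leq\vert P_t\vert g(x)$ such that $(P_t)_{t\in[0,T]}$ is tight and each $P_t$ is infinitely divisible with L\'{e}vy measure supported in a fixed compact set. Under (h2) nothing at all is assumed about $(Y_t)_{t\in[0,T]}$ beyond predictability and integrability of its paths, and the constraints (\ref{alphaH2}) force $\sup_x\alpha(t,x)\geq \vert Y_t+b^h\vert/\int_{[-a,a]^c}\vert x\vert F(dx)$: to hit a mean far out in the tail while keeping the total mass fixed at $\lambda$, the density must be large where $F$ has little mass. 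Hence any admissible dominating process $P_t$ majorizes an unbounded function of $Y_t$, and no tightness or infinite divisibility with compactly supported L\'{e}vy measure can be extracted. Nor can the criterion be patched: the L\'{e}pingle--M\'{e}min approach underlying Theorem~\ref{Thm2} needs exponential moments of the compensator $f(\alpha-1)\ast\nu$, roughly of $\int_0^\cdot\int\alpha\log\alpha\,dF\,ds$, and Example~\ref{LMrelax} shows these already fail when the dominating variable is infinitely divisible with unbounded L\'{e}vy support --- let alone for a completely arbitrary $Y$. This is exactly why the paper proves (h2) by a different argument, which your proposal is missing: the first relation in (\ref{alphaH2}) gives $(\alpha-1)\ast\nu\equiv 0$, so $Z:=\mathscr{E}((\alpha-1)\ast(\mu-\nu))$ is the finite product $Z_t=\prod_{n=1}^{N_t}\alpha(T_n,Z_n)$ over the jump times $T_n$ and jump sizes $Z_n$ of $x\mathds{1}_{[-a,a]^c}(x)\ast\mu$; each factor has conditional mean one given $\mathscr{F}_{T_n-}$ (Lemma~\ref{nuSpec}), so $(Z_{T_n})_{n\geq 0}$ is a positive martingale with mean one, and since under the measures $\mathbb{Q}^n$ defined by $d\mathbb{Q}^n=Z_{T_n}d\mathbb{P}$ on $\mathscr{F}_{T_n}$ the jumps still arrive according to a Poisson process with intensity $\lambda$, one has $\mathbb{Q}^n(T_n<T)=\mathbb{P}(T_n<T)\to 0$, which allows passage to the limit and yields $\mathbb{E}[Z_T]=1$. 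Without this jump-time induction (or an equivalent substitute), your proof of (h2) --- the hypothesis that actually drives Theorem~\ref{nonGaussianEMM} under (\ref{fSupp}) --- does not close.
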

\begin{remark}\label{charAnalysis}
Suppose that Theorem~\ref{Thm1} is applicable. Observe that, for instance by varying $a,b>0$, an EMM for $(X_t)_{t \in [0,T]}$ is not unique. In the following, fix an EMM $\mathbb{Q}$ for $(X_t)_{t \in [0,T]}$, under which its characteristics have a differential form as in (\ref{characteristics}) relative to a truncation function $h$. As a first comment we see that, as long as $(Y_t)_{t \in [0,T]}$ is not deterministic, the characteristic triplet under $\mathbb{Q}$ of both $(L_t)_{t \in [0,T]}$ and $(X_t)_{t\in [0,T]}$ will not be deterministic. Consequently by \cite[Theorem~4.15~(Ch.~II)]{JS}, none of them have independent increments, in particular they will never be L\'{e}vy processes, under $\mathbb{Q}$. 

Despite the fact that $(X_t)_{t\in [0,T]}$ does not have independent increments under $\mathbb{Q}$ we may still extract some useful information from the differential characteristics. Indeed, according to \cite[Theorem~2.34~(Ch.~II)]{JS}, we may represent $(X_t)_{t \in [0,T]}$ through its canonical decomposition (under $\mathbb{P}$) as
\begin{align}\label{canonicalRep}
X_t = X_t^c +  h(x)\ast (\mu - \nu)_t +  (x-h(x))\ast \mu_t + \int_0^t (Y_s + b^h)ds, \quad t \in [0,T], 
\end{align}
where $(X_t^c)_{t \in [0,T]}$ is the continuous martingale part of $(X_t)_{t \in [0,T]}$ under $\mathbb{P}$ and $\ast$ denotes integration, see Section~\ref{prel} for more on the notation. Furthermore, recall that $b^h \in \mathbb{R}$ is the drift component of $(L_t)_{t \in [0,T]}$ relative to $h$ and $\mu$ is the jump measure associated to $(X_t)_{t\in [0,T]}$ (or equivalently, $(L_t)_{t\in [0,T]}$). Consider the specific truncation function $h(x) = x \mathds{1}_{(a,b)^c}(\vert x\vert)$ under (h1) and $h(x) = x \mathds{1}_{[-a,a]}(x)$ under (h2). From (\ref{characteristics}) and (\ref{canonicalRep}) we deduce under $\mathbb{Q}$:
\begin{enumerate}[(i)]
\item The process $X_t^c$, $t \in [0,T]$, remains an $(\mathscr{F}_t)_{t\in [0,T]}$-Brownian motion with variance $c$.
\item It still holds that $h(x) \ast (\mu - \nu)_t$, $t\in [0,T]$, is a zero-mean $(\mathscr{F}_t)_{t \in [0,T]}$-L\'{e}vy process and its distribution is unchanged.
\item The process
\begin{align}\label{measureAffect}
(x-h(x))\ast \mu_t + \int_0^t (Y_s+ b^h)ds, \quad t \in [0,T],
\end{align}
is a local martingale, since $(X_t)_{t\in [0,T]}$ is a local martingale.
\item Except for the drift term involving $(Y_t)_{t\in [0,T]}$, it follows that the only component in (\ref{canonicalRep}) affected by the change of measure (under any of the hypotheses) is $(x-h(x))\ast \mu_t$, $t \in [0,T]$, which goes from a compound Poisson process under $\mathbb{P}$ to a general c\'{a}dl\'{a}g and piecewise constant process under $\mathbb{Q}$. Specifically, it will be affected in such a way that it is compensated according to (\ref{measureAffect}). By exploiting the structure of the compensator of $\mu$ under $\mathbb{Q}$ it follows that the jumps of $(x-h(x))\ast \mu_t$, $t\in [0,T]$, still arrive according to a Poisson process (with the same intensity as under $\mathbb{P}$) under (h2) while under (h1), they will arrive according to a counting process with a stochastic intensity. The (conditional) jump distribution is obtained from Lemma~\ref{nuSpec}.
\end{enumerate}
Note that although, strictly speaking, the function $h(x) = x \mathds{1}_{(a,b)}(\vert x \vert )$ is not a genuine truncation function, we are allowed to use it as such, since $\int_{\{\vert x \vert >1\}} \vert x \vert F(dx) <\infty$ by assumption, which means the integrals in (\ref{canonicalRep}) will be well-defined.
\end{remark}

\begin{remark}\label{thm1assump}
As a first comment on the hypotheses presented in the statement of Theorem~\ref{Thm1} we see that none of them is superior to the other one. Rather, there is a trade off between the restrictions on $(L_t)_{t\in [0,T]}$ and on $(Y_t)_{t\in [0,T]}$. In line with Remark~\ref{LMassumptions}, one may as well replace (h1) by
\begin{enumerate}[(h1')]
\item For any $t\in [0,T]$ and a suitable $\varepsilon >0$, $Y_t \overset{\mathscr{D}}{=}Y_0$ and $\mathbb{E}\big[e^{\varepsilon \vert Y_0 \vert \log (1+ \vert Y_0 \vert)}\big]<\infty$.
\end{enumerate}
The advantage of this hypothesis is that one is not restricted to the case where $Y_t$ is infinitely divisible, however the price to pay is to require that $Y_t \overset{\mathscr{D}}{=} Y_0$ rather than the much weaker assumption of $(Y_t)_{t\in [0,T]}$ being tight.
\end{remark}
\begin{remark}
Suppose that we have $(L_t)_{t\geq 0}$ and $(Y_t)_{t\geq 0}$ defined on $(\Omega, \mathscr{F}, (\mathscr{F}_t)_{t\geq 0}, \mathbb{P})$ with $\mathscr{F}= \bigvee_{t \geq 0} \mathscr{F}_t$, and that Theorem~\ref{Thm1} is applicable on the truncated space $(\Omega, \mathscr{F}_T, (\mathscr{F}_t)_{t\in [0,T]}, \mathbb{P}\vert_{\mathscr{F}_T})$ for any $T>0$. Then one can sometimes extend it to a locally equivalent measure $\mathbb{Q}$ on $(\Omega, \mathscr{F})$. A probability space having this property is often referred to as being full. An example is the space of all càdlàg functions taking values in a Polish space when equipped with its standard filtration. For more details, see \cite{BichtelerJumps} and \cite{dCriens}.
\end{remark}
Despite of a common structure in (\ref{characteristics}) under (h1) and (h2), the choices of $\alpha$ that we suggest under the different hypotheses in Theorem~\ref{Thm1} differ by their very nature. This is a consequence of different ways of constructing the EMM.

The proof of the existence of an EMM for $(X_t)_{t \in [0,T]}$ consists of two steps. One step is to identify an appropriate possible probability density $Z$, that is, a positive random variable which, given that $\mathbb{E}[Z]=1$, defines an EMM $\mathbb{Q}$ on $(\Omega, \mathscr{F})$ for $(X_t)_{t\in [0,T]}$ through $d\mathbb{Q} = Zd\mathbb{P}$. The candidate will always take the form $Z = \mathscr{E}((\alpha -1)\ast (\mu - \nu))_T$ for some positive predictable function $\alpha$. The remaining step is to check that $\mathbb{E}[Z]=1$ or, equivalently, $\mathscr{E}((\alpha -1)\ast (\mu - \nu))$ is a martingale. Although there exist several sharp results on when local martingales are true martingales, there has been a need for a tractable condition which is suited for the specific setup in question, and this was the motivation for Theorem~\ref{Thm2}. Specifically, it will be used to show Theorem~\ref{Thm1} under hypothesis (h1). As mentioned, the proof of Theorem~\ref{Thm2} is based on a very general approach presented by Lépingle and Mémin \cite{LepingleMemin}.

\begin{theorem}\label{Thm2}
Let $W: \Omega\times [0,T] \times \mathbb{R}\to \mathbb{R}_+$ be a predictable function. Suppose that
\begin{align}\label{Wdominated}
W(t,x) \leq \vert P_t \vert g(x),
\end{align}
where the following hold:
\begin{enumerate}[(a)]
\item The process $(P_t)_{t\in [0,T]}$ is predictable and satisfies that
\begin{enumerate}[(i)]
\item for some fixed $K>0$ and any $t \in [0,T]$, $P_t$ is infinitely divisible with Lévy measure supported in $[-K,K]$, and
\item the collection of random variables $(P_t)_{t \in [0,T]}$ is tight.
\end{enumerate}
\item The function $g: \mathbb{R}\to \mathbb{R}_+$ satisfies $g+g\log(1+g)\in L^1(F)$.
\end{enumerate}
Then $W\ast (\mu - \nu)$ is well-defined and $\mathscr{E}(W \ast (\mu - \nu))$ is a martingale.
\end{theorem}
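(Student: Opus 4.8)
The plan is to invoke the criterion of L\'{e}pingle and M\'{e}min \cite{LepingleMemin} in its predictable form. Write $M = W\ast(\mu-\nu)$ and set $\psi(y) = (1+y)\log(1+y) - y$ for $y\ge 0$. Since $W\ge 0$ we have $1 + \Delta M \ge 1$, so $\mathscr{E}(M)$ is a nonnegative local martingale, hence a supermartingale with $\mathscr{E}(M)_0 = 1$; the martingale property on $[0,T]$ is therefore equivalent to $\mathbb{E}[\mathscr{E}(M)_T] = 1$. The L\'{e}pingle--M\'{e}min machinery reduces this to controlling the predictable ``exponential compensator''
\begin{align*}
\tilde A_t = \int_0^t\int_\mathbb{R}\psi(W(s,x))\,\nu(ds,dx) = \int_0^t\int_\mathbb{R}\bigl[(1+W)\log(1+W) - W\bigr]F(dx)\,ds,
\end{align*}
the point being that exponential integrability of $\tilde A$ over a time interval forces the martingale property there. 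First I would record well-definedness of $M$: since $\psi(y)\ge c\,(y^2\wedge y)$, finiteness of $\int\psi(W)\,d\nu$ dominates $(W^2\wedge W)\ast\nu$, so that $W\in G_{\mathrm{loc}}(\mu)$ after localizing along times where $|P_s|$ is bounded.

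Next I would turn hypothesis (b) into a bound on $\tilde A$. Using the elementary inequalities $\psi(y)\le y\log(1+y)$ and $1 + pg\le(1+p)(1+g)$ one gets $\psi(|P_s|\,g(x))\le |P_s|\log(1+|P_s|)\,g(x) + |P_s|\,g(x)\log(1+g(x))$, whence, since $W\le |P_s|g$ and $\psi$ is increasing,
\begin{align*}
\tilde A_T \le \int_0^T \Phi(|P_s|)\,ds, \qquad \Phi(p) := \int_\mathbb{R}\psi(pg)\,F(dx) \le C_1\,p\log(1+p) + C_2\,p,
\end{align*}
with $C_1 = \|g\|_{L^1(F)}$ and $C_2 = \|g\log(1+g)\|_{L^1(F)}$, both finite by (b). Thus everything reduces to exponential moments of $|P_s|\log(1+|P_s|)$. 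The heart of the argument is a uniform-in-$s$ such bound: tightness of $(P_s)_{s\in[0,T]}$ together with infinite divisibility forces the L\'{e}vy--Khintchine triplets of the $P_s$ to be uniformly bounded, and combined with the L\'{e}vy measures being supported in $[-K,K]$ the Laplace exponents satisfy $\log\mathbb{E}[e^{uP_s}]\le c_1u^2 + c_2 e^{c_3|u|}$ uniformly in $s$. Optimizing the resulting (Poisson-type) tail bound, in the spirit of the moment results for infinitely divisible laws in \cite[Theorem~26.1]{Sato}, yields a threshold $\lambda_0 > 0$ with $\sup_{s\in[0,T]}\mathbb{E}[e^{\lambda|P_s|\log(1+|P_s|)}] < \infty$ for every $\lambda < \lambda_0$; this is where both parts of hypothesis (a) are essential.

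The main obstacle is that one cannot apply a one-shot criterion: the constant $C_1T$ may exceed $\lambda_0$, and indeed for a compound-Poisson $P_s$ one checks that the global quantity $\mathbb{E}[e^{\tilde A_T}]$ is typically infinite, since $|P_s|\log(1+|P_s|)$ has only Poisson-type moments. This is precisely the situation the L\'{e}pingle--M\'{e}min approach is designed for, and it is resolved by subdivision. I would fix a partition $0 = t_0 < \cdots < t_n = T$ with mesh $\delta$ so small that $\delta C_1 < \lambda_0$. On each block $[t_i,t_{i+1}]$, Jensen's inequality gives $\exp\bigl(\int_{t_i}^{t_{i+1}}\Phi(|P_s|)\,ds\bigr) \le \delta^{-1}\int_{t_i}^{t_{i+1}}e^{\delta\Phi(|P_s|)}\,ds$, so the uniform moment bound yields $\sup_i\mathbb{E}\bigl[\exp\bigl(\tilde A_{t_{i+1}}-\tilde A_{t_i}\bigr)\bigr] < \infty$. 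The per-block L\'{e}pingle--M\'{e}min criterion, applied to the restricted martingale $M^{(i)} = (W\mathds{1}_{(t_i,\cdot]})\ast(\mu-\nu)$, then gives the conditional martingale property $\mathbb{E}[\mathscr{E}(M)_{t_{i+1}}/\mathscr{E}(M)_{t_i}\mid\mathscr{F}_{t_i}] = 1$ across each block, and successive conditioning via the tower property gives $\mathbb{E}[\mathscr{E}(M)_T] = 1$, completing the proof. The delicate point in this last step is passing from the unconditional block estimate to its conditional counterpart, which is exactly what the subdivision device in \cite{LepingleMemin} is built to handle.
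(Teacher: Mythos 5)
Your proposal is correct and follows essentially the same route as the paper's proof: the uniform bound $\sup_{t\in[0,T]}\mathbb{E}\big[e^{\lambda \vert P_t\vert \log(1+\vert P_t\vert)}\big]<\infty$ for small $\lambda$ extracted from infinite divisibility, compactly supported L\'evy measures and tightness, the estimate $\int_\mathbb{R}\psi(\vert P_s\vert g)\,dF \leq C_1\vert P_s\vert\log(1+\vert P_s\vert)+C_2\vert P_s\vert$ on the $f$-compensator, and the subdivision of $[0,T]$ with Jensen's inequality feeding a blockwise L\'epingle--M\'emin criterion plus the tower property, which is exactly the content of Lemma~\ref{localLM} applied with deterministic times. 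The remaining differences are cosmetic: your explicit constants versus the paper's dominated-convergence bound on $h(y)=\int f(yg)\,dF$, and your $G_{\mathrm{loc}}(\mu)$ localization for well-definedness versus the paper's direct bound $\mathbb{E}[W\ast\nu_T]\leq T\sup_t\mathbb{E}[\vert P_t\vert]\int g\,dF<\infty$.
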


The following example shows how this result compares to other classical references for measure changes, when specializing to the case where $\mu$ is the jump measure of a Poisson process.
\begin{example}
Suppose that $L$ is a (homogeneous) Poisson process with intensity $\lambda >0$ and consider a density $Z=\mathscr{E}((\alpha - 1)\ast (\mu - \nu))_T$ for some positive predictable process $(\alpha_t)_{t\in [0,T]}$ which paths are integrable on $[0,T]$ almost surely. Within the literature of (marked) point processes, with this setup as a special case, one explicit and standard criterion ensuring that $\mathbb{E}[Z]=1$ is the existence of constants $K_1,K_2>0$ and $\gamma>1$ such that
\begin{align}\label{BremaudCond}
\alpha_t^\gamma \leq K_1 + K_2 (L_t + \lambda t)
\end{align}
for all $t \in [0,T]$ almost surely, see \cite[Theorem~T11~(Ch.~VIII)]{Bremaudpp} or \cite[Eq.~(25)]{gjessing}. We observe that the inequality in (\ref{BremaudCond}) implies that (\ref{Wdominated}) holds with $g\equiv 1$ and $P_t = 2+ K_1 + K_2(L_t + \lambda t)$, $t \in [0,T]$, where $(P_t)_{t\in [0,T]}$ meets (i)-(ii) in Theorem~\ref{Thm2}, thus this criterion is implied by our result. Clearly, this also indicates that we cover other, less restrictive, choices of $(\alpha_t)_{t\in [0,T]}$. For instance, one could take $\gamma =1$ and replace $L$ by any L\'{e}vy process with a compactly supported L\'{e}vy measure in (\ref{BremaudCond}). Note that, although we might have $\alpha(t,x) -1<0$, Theorem~\ref{Thm2} may still be applied according to Remark~\ref{relaxW}. For other improvements of (\ref{BremaudCond}), see also \cite{Sokol}.
\end{example}
Section~\ref{proofs} contains proofs of the statements above accompanied by a minor supporting result and a discussion of the techniques. However, we start by recalling some fundamental concepts which will be (and already has been) used repeatedly.
\section{Preliminaries}\label{prel}
The following consists of a short recap of fundamental concepts. For a more formal and extensive treatment, see \cite{JS}.

The stochastic exponential $\mathscr{E}(M) = (\mathscr{E}(M)_t)_{t \in [0,T]}$ of a semimartingale $(M_t)_{t\in [0,T]}$ is characterized as the unique càdlàg and adapted process with
\begin{align*}
\mathscr{E}(M)_t = 1 + \int_0^t \mathscr{E}(M)_{s-} dM_s
\end{align*}
for $t \in [0,T]$. It is explicitly given as
\begin{align}\label{stochExp2}
\mathscr{E}(M)_t = e^{M_t - M_0 - \tfrac{1}{2}\langle M^c \rangle_t} \prod_{s\leq t} (1+ \Delta M_s)e^{-\Delta M_s}, \quad t \in [0,T],
\end{align}
where $(M^c_t)_{t\in [0,T]}$ is the continuous martingale part of $(M_t)_{t\in [0,T]}$. If $(M_t)_{t \in [0,T]}$ is a local martingale, $\mathscr{E}(M)$ is a local martingale as well. Consequently whenever $\mathscr{E}(M)_t \geq 0$, equivalently $\Delta M_t \geq -1$,  for all $t\in [0,T]$ almost surely, $\mathscr{E}(M)$ is a supermartingale. (Here, and in the following, we have adopted the definition of a semimartingale from \cite{JS}, which in particular means that the process is c\'{a}dl\'{a}g.)

A random measure on $[0,T] \times \mathbb{R}$ is a family of measures $\mu$ such that for each $\omega \in \Omega$, $\mu (\omega ; dt,dx)$ is a measure on $([0,T]\times \mathbb{R}, \mathscr{B}([0,T]) \otimes \mathscr{B}(\mathbb{R}))$ satisfying $\mu (\omega ; \{0\} \times \mathbb{R})=0$. For our purpose, $\mu$ will also satisfy that $\mu (\omega;[0,T] \times \{0\})=0$. Integration of a function $W:\Omega \times [0,T]\times \mathbb{R}\to \mathbb{R}$ with respect to $\mu$ over the set $(0,t]\times \mathbb{R}$ is denoted $W\ast \mu_t$ for $t \in [0,T]$. In this paper, $\mu$ will always be the jump measure of some adapted càdlàg process. To any such $\mu$, one can associate a unique (up to a null set) predictable random measure $\nu$, which is called its compensator. We will always be in the case where $\nu (\omega; dt,dx) = F_t (\omega; dx)dt$ with $(F_t(B))_{t \in [0,T]}$ being a predictable process for every $B \in \mathscr{B}(\mathbb{R})$. One can define the stochastic integral with respect to the compensated random measure $\mu - \nu$ for any predictable function $W: \Omega \times [0,T]\times \mathbb{R} \to \mathbb{R}$ satisfying that $(W^2\ast \mu_t)^{1/2}$, $t \in [0,T]$, is locally integrable. The associated integral process is denoted $W \ast (\mu - \nu)$.

Let $h:\mathbb{R}\to \mathbb{R}$ be a bounded function with $h(x)=x$ in a neighbourhood of $0$. The characteristics of a semimartingale $(M_t)_{t\in [0,T]}$, relative to the truncation function $h$, are then denoted $(C,\nu,B^h)$, which is unique up to a null set. Here $C$ is the quadratic variation of the continuous martingale part of $(M_t)_{t\in [0,T]}$, $\nu$ is the predictable compensator of its jump measure, and $B^h$ is the predictable finite variation part of the special semimartingale given by $M^h_t = M_t - \sum_{s\leq t}[\Delta M_s - h(\Delta M_s)]$ for $t \in [0,T]$. In the case where
\begin{align*}
C_t = \int_0^t c_s ds, \quad \nu(\omega;dt,dx)=F_t(\omega;dx)dt, \quad \text{and} \quad B^h_t = \int_0^tb^h_s ds 
\end{align*}
for suitable predictable processes $(b^h_t)_{t\in [0,T]}$ and $(c_t)_{t\in [0,T]}$ and transition kernel $F_t(\omega;dx)$, we call $(c_t,F_t, b^h_t)$ the differential characteristics of $(M_t)_{t\in [0,T]}$.

Suppose that we have another probability measure $\mathbb{Q}$ on $(\Omega, \mathscr{F})$ such that $d\mathbb{Q} = \mathscr{E}(W\ast (\mu -\nu))_T d\mathbb{P}$, where $\mu$ is the jump measure of an $(\mathscr{F}_t)_{t\in [0,T]}$-L\'{e}vy process $(L_t)_{t \in [0,T]}$ with characteristic triplet $(c, F,b^h)$ relative to a given truncation function $h$ and $\nu$ is the compensator of $\mu$. Then a version of Girsanov's theorem, see \cite{BNSh} or \cite{kallsenDidactic}, implies that under $\mathbb{Q}$, $(L_t)_{t\in [0,T]}$ is a semimartingale with differential characteristics $(c, F_t , b^h_t)$, where
\begin{align}\label{GirsanovF}
F_t (dx) = (1 + W(t,x))F(dx) \quad \text{and}\quad b^h_t = b^h + \int_\mathbb{R} W(t,x) h(x) F(dx).
\end{align}
\section{Proofs}\label{proofs}
In the following, let $f:(-1,\infty)\to \mathbb{R}_+$ be defined by
\begin{align}\label{fFunction}
f(x) = (1+x)\log(1+x)-x, \quad x >-1.
\end{align}
In order to show Theorem~\ref{Thm2} we will state and prove a local version of \cite[Theorem~1~(Section~III)]{LepingleMemin} below.
\begin{lemma}\label{localLM}
Let $(M_t)_{t\in [0,T]}$ be a purely discontinuous local martingale with $\Delta M_t >-1$ for all $t \in [0,T]$ almost surely. Suppose that the process
\begin{align*}
\sum_{s \leq t} f(\Delta M_s), \quad t \in [0,T],
\end{align*}
has compensator $(\tilde{A}_t)_{t\in [0,T]}$ and that there exists stopping times $0=\tau_0 < \tau_1 < \cdots < \tau_n=T$ such that 
\begin{align}\label{stepwiseFinite}
\mathbb{E}\left[\exp \left\{\tilde{A}_{\tau_k} - \tilde{A}_{\tau_{k-1}} \right\}\right] < \infty
\end{align}
for all $k=1,\dots, n$. Then $\mathscr{E}(M)$ is a martingale.
\end{lemma}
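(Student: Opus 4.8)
The plan is to reduce the statement to the global criterion of Lépingle and Mémin \cite{LepingleMemin} (their Theorem~1 of Section~III), which asserts that a purely discontinuous local martingale $N$ with $\Delta N > -1$ generates a uniformly integrable martingale $\mathscr{E}(N)$ as soon as the compensator of $\sum_{s\le\cdot} f(\Delta N_s)$ has finite terminal exponential moment. The idea is to apply this global result block by block on the intervals $[\tau_{k-1},\tau_k]$ and to glue the pieces together multiplicatively.

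First I would record two reductions. Since $\Delta M_t > -1$ almost surely, the product formula (\ref{stochExp2}) shows $\mathscr{E}(M)\ge 0$, so $\mathscr{E}(M)$ is a nonnegative local martingale, hence a supermartingale with $\mathscr{E}(M)_0 = 1$; it is therefore a martingale on $[0,T]$ as soon as $\mathbb{E}[\mathscr{E}(M)_T]=1$. I would prove the stronger family of identities $\mathbb{E}[\mathscr{E}(M)_{\tau_k}] = 1$ for $k=0,1,\dots,n$ by induction on $k$, the case $k=0$ being trivial and the case $k=n$ (where $\tau_n=T$) being the desired conclusion.

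For the inductive step, fix $k$ and introduce the increment martingale $M^{(k)}_t := M_{t\wedge\tau_k} - M_{t\wedge\tau_{k-1}}$, a purely discontinuous local martingale with $\Delta M^{(k)} > -1$ whose jumps coincide with those of $M$ on $(\tau_{k-1},\tau_k]$ and vanish elsewhere. Hence $\sum_{s\le t} f(\Delta M^{(k)}_s)$ has compensator $\tilde{A}_{t\wedge\tau_k}-\tilde{A}_{t\wedge\tau_{k-1}}$, whose terminal value is $\tilde{A}_{\tau_k}-\tilde{A}_{\tau_{k-1}}$, so that (\ref{stepwiseFinite}) is exactly the global exponential bound for $M^{(k)}$ and the Lépingle--Mémin theorem yields that $\mathscr{E}(M^{(k)})$ is a uniformly integrable martingale. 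Because $M^{(k)}=0$ on $[0,\tau_{k-1}]$ we have $\mathscr{E}(M^{(k)})_{\tau_{k-1}}=1$, and the martingale property gives $\mathbb{E}[\mathscr{E}(M^{(k)})_T \mid \mathscr{F}_{\tau_{k-1}}]=1$. The restart identity for stochastic exponentials---obtained by checking that $\mathscr{E}(M)_{\cdot}/\mathscr{E}(M)_{\tau_{k-1}}$ and $\mathscr{E}(M^{(k)})$ solve the same linear equation $dU = U_-\,dM$ on $[\tau_{k-1},\tau_k]$ with common initial value $1$---gives $\mathscr{E}(M)_{\tau_k} = \mathscr{E}(M)_{\tau_{k-1}}\,\mathscr{E}(M^{(k)})_T$. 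Since $\mathscr{E}(M)_{\tau_{k-1}}$ is $\mathscr{F}_{\tau_{k-1}}$-measurable and everything is nonnegative, conditioning on $\mathscr{F}_{\tau_{k-1}}$ yields
\begin{align*}
\mathbb{E}[\mathscr{E}(M)_{\tau_k}] = \mathbb{E}\big[\mathscr{E}(M)_{\tau_{k-1}}\,\mathbb{E}[\mathscr{E}(M^{(k)})_T \mid \mathscr{F}_{\tau_{k-1}}]\big] = \mathbb{E}[\mathscr{E}(M)_{\tau_{k-1}}] = 1,
\end{align*}
the last equality being the inductive hypothesis. This closes the induction, and at $k=n$ gives $\mathbb{E}[\mathscr{E}(M)_T]=1$, whence $\mathscr{E}(M)$ is a martingale.

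I expect the main obstacle to be the careful bookkeeping in the gluing rather than any single deep estimate: verifying that the compensator of the $f$-jump sum of $M^{(k)}$ is precisely the increment $\tilde{A}_{\cdot\wedge\tau_k}-\tilde{A}_{\cdot\wedge\tau_{k-1}}$ (so that (\ref{stepwiseFinite}) matches the global hypothesis applied to $M^{(k)}$), and justifying the multiplicative restart identity $\mathscr{E}(M)_{\tau_k}=\mathscr{E}(M)_{\tau_{k-1}}\mathscr{E}(M^{(k)})_T$. A point to emphasise is that the Lépingle--Mémin conclusion is genuinely a \emph{uniformly integrable} martingale: it is this uniform integrability that legitimises passing the conditional expectation through at the stopping time $\tau_{k-1}$. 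Once these structural facts are in place, the exponential-moment hypothesis enters only through the off-the-shelf global theorem, and the remaining probabilistic content is the block-wise conditioning displayed above.
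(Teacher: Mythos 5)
Your proposal is correct and follows essentially the same route as the paper: decompose $M$ into the increment martingales $M^{(k)}_t = M_{t\wedge\tau_k}-M_{t\wedge\tau_{k-1}}$, identify the compensator of $\sum_{s\le\cdot} f(\Delta M^{(k)}_s)$ as $\tilde{A}_{\cdot\wedge\tau_k}-\tilde{A}_{\cdot\wedge\tau_{k-1}}$, apply the global L\'epingle--M\'emin-type criterion (the paper cites the Protter--Shimbo version) on each block, glue multiplicatively, and conclude by induction combined with the supermartingale property. The only cosmetic difference is that you justify the gluing identity $\mathscr{E}(M)_{\tau_k}=\mathscr{E}(M)_{\tau_{k-1}}\mathscr{E}(M^{(k)})_T$ by uniqueness of solutions to $dU=U_-\,dM$, while the paper derives the same product structure from Yor's formula together with $[M^{(k)},M^{(l)}]=0$.
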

\begin{proof}
The following technique of proving the result is similar to the one used in the proof of \cite[Lemma~13]{ProtterShimbo}.

For a given $k \in \{1, \dots, n\}$ define the process
\begin{align*}
M^{(k)}_t = M_{t \wedge \tau_k} - M_{t \wedge \tau_{k-1}}
\end{align*}
for $t \in [0,T]$. Note that $(M^{(k)}_t)_{t\in [0,T]}$ is a (purely discontinuous) local martingale and consequently, $\mathscr{E}\left(M^{(k)}\right)$ is a local martingale. 

Due to the jump structure
\begin{align}\label{jumpStruc}
\Delta M^{(k)}_t =\begin{cases} \Delta M_t & \text{if } t \in (\tau_{k-1},\tau_k] \\
0 & \text{otherwise}
\end{cases}
\end{align} 
it holds that
\begin{align}\label{stoppedJumpProc}
\sum_{s\leq t} f\big(\Delta M^{(k)}_s\big) = \sum_{s\leq t \wedge \tau_k} f(\Delta M_s) - \sum_{s\leq t \wedge \tau_{k-1}} f(\Delta M_s)
\end{align}
for $t \in [0,T]$. Consequently, the compensator of (\ref{stoppedJumpProc}) is $(\tilde{A}_{t \wedge \tau_k} - \tilde{A}_{t \wedge \tau_{k-1}})_{t\in [0,T]}$, and due to the assumption in (\ref{stepwiseFinite}) it follows by \cite[Theorem~8]{ProtterShimbo} that $\mathscr{E}\big(M^{(k)}\big)$ is a martingale. 

By \cite[p.~404]{kallsenshi} we know that for $k \in \{1,\dots, n-1\}$,
\begin{align*}
\mathscr{E}\big(M^{(k)} \big)\mathscr{E}\big(M^{(k+1)} \big) = \mathscr{E}\big(M^{(k)} + M^{(k+1)}+ \big[ M^{(k)}, M^{(k+1)}\big] \big).
\end{align*}
Using (\ref{jumpStruc}) and that $(M^{(k)}_t)_{t\in [0,T]}$ is purely discontinuous, one finds that $\big[M^{(k)}, M^{(k+1)} \big]=0$, so for any $t\in [0, \tau_k]$,
\begin{align*}
\mathscr{E}(M)_t = \mathscr{E}\biggr( \sum_{l=1}^k M^{(l)} \biggr)_t = \prod_{l=1}^k \mathscr{E}\big(M^{(l)}\big)_t.
\end{align*}
Since $\mathscr{E}\big(M^{(l)} \big)_t = \mathscr{E}\big(M^{(l)}\big)_{\tau_{k-1}}$ for all $t \geq \tau_{k-1}$ and $l < k$,
\begin{align*}
\mathbb{E}\left[\mathscr{E}\big(M\big)_{\tau_k}  \right] = \mathbb{E}\biggr[\mathbb{E}\biggr[\mathscr{E}\big(M^{(k)} \big)_{\tau_k} \mid \mathscr{F}_{\tau_{k-1}} \biggr] \prod_{l=1}^{k-1} \mathscr{E}\big(M^{(l)}\big)_{\tau_{k-1}} \biggr] = \mathbb{E}\left[\mathscr{E}\big(M\big)_{\tau_{k-1}}  \right].
\end{align*}
As a consequence, we get inductively that $\mathbb{E}\left[\mathscr{E}(M)_T \right] = \mathbb{E}\left[\mathscr{E}(M)_0\right] =1$. Since $\mathscr{E}(M)$ is a supermartingale, we have the result.
\end{proof}

\begin{proof}[Proof of Theorem~\ref{Thm2}]
We divide the proof into two steps; the first step is to show that assumptions (i)-(ii) on $(P_t)_{t\in [0,T]}$ imply that for $\varepsilon\in (0,1/K)$,
\begin{align}\label{finiteExpect}
\sup_{t \in [0,T]} \mathbb{E}\left[e^{\varepsilon \vert P_t \vert \log (1+ \vert P_t \vert)} \right] < \infty,
\end{align}
and the second step will use this fact to prove that $W\ast (\mu - \nu)_t$, $t\in [0,T]$, is well-defined and that $\mathscr{E}(W\ast (\mu - \nu))$ is a martingale. 

\emph{Step 1:}
The idea is to use a procedure similar to the one in \cite[Lemma~26.5]{Sato} and exploit the tightness property of $(P_t)_{t \in [0,T]}$ to get a uniform result across $t$. In the following we write
\begin{align*}
\Psi_t (u) :=\log \mathbb{E}\big[e^{uP_t}\big]= \frac{1}{2} c_t u^2+ \int_\mathbb{R}(e^{ux} - 1-ux\mathds{1}_{[-1,1]}(x))F_t (dx) + b_t u, \quad u \in \mathbb{R},
\end{align*}
for the Laplace exponent of $P_t$ with associated triplet $(c_t,F_t,b_t)$, $t \in [0,T]$, relative to the truncation function $h(x)= x \mathds{1}_{[-1,1]}(x)$. By the compact support of $F_t$, it follows that $\Psi_t(u)\in \mathbb{R}$ is well-defined for all $u \in \mathbb{R}$ and $t \in [0,T]$ (cf. \cite[Theorem~25.17]{Sato}). For fixed $t$, it holds that $\Psi_t \in C^\infty$,
\begin{align}\label{psiPrime}
\Psi_t'(u) = c_t u+  \int_\mathbb{R} (xe^{ux}-x\mathds{1}_{[-1,1]}(x)) F_t (dx) + b_t,\quad u \in \mathbb{R},
\end{align}
and $\Psi_t'' > 0$, see \cite[Lemma~26.4]{Sato}. From (\ref{psiPrime}) and the inequality $\vert e^{ux} - 1 \vert \leq e^{uK}\vert x \vert$, for $x \in [-K,K]$ and $u \geq 0$, we get the bound
\begin{align}\label{psiPrimeBound}
\Psi_t' (u) \leq c_t u + e^{u K}\int_\mathbb{R} x^2 F_t(dx) + b_t + K F_t((1,K]).
\end{align}
Now suppose that $\sup_{t\in [0,T]} \int_\mathbb{R} x^2 F_t(dx) = \infty$. Then, by the tightness of $(P_t)_{t\in [0,T]}$, we may according to Prokhorov's theorem choose a sequence $(t_n)_{n\geq 1} \subseteq [0,T]$ and a random variable $P$ such that
\begin{align}\label{hypothesis}
P_{t_n} \overset{\mathscr{D}}{\to} P \quad \text{and}\quad \lim_{n \to \infty} \int_\mathbb{R} x^2F_{t_n}(dx) = \infty.
\end{align}
Since $P$ is infinitely divisible, it has an associated characteristic triplet $(c, \rho, b)$. By \cite[Theorem~2.9~(Ch.~VII)]{JS} it holds that
\begin{align*}
\lim_{n \to \infty}\int_\mathbb{R} g d F_{t_n}  =\int_\mathbb{R} g d\rho
\end{align*}
for all $g:\mathbb{R} \to \mathbb{R}$ which are continuous, bounded, and vanishing in a neighbourhood of zero. In particular by the uniformly compact support of $(F_{t_n})_{n\geq 1}$, we get that $\rho$ is compactly supported. As a consequence, \cite[Theorem~2.14~(Ch.~VII)]{JS} and (\ref{hypothesis}) imply that
\begin{align*}
c + \int_\mathbb{R} x^2 \rho (dx) = \lim_{n \to \infty} \left( c_{t_n} + \int_\mathbb{R} x^2 F_{t_n}(dx)\right) = \infty,
\end{align*}
a contradiction, and we conclude that $\sup_{t \in [0,T]} \int_\mathbb{R} x^2 F_t (dx) < \infty$. The same reasoning gives that both $\sup_{t \in [0,T]} c_t$ and $\sup_{t \in [0,T]} (b_t + F_t((1,K]))$ are finite as well. From these observations and (\ref{psiPrimeBound}) we deduce the existence of a constant $C>0$ such that
\begin{align}\label{uniformBound}
\Psi_t'(u) \leq C(1+u+e^{uK})
\end{align}
for $u \geq 0$. 

We may without loss of generality assume that 
\begin{align}\label{infinityAssumption}
\lim_{u\to \pm\infty} \Psi_t'(u) =\infty 
\end{align}
for all $t \in [0,T]$. To see this, let $N^+$ and $N^-$ be standard Poisson random variables which are independent of  each other and of $(P_t)_{t \in [0,T]}$, and consider the process $(\tilde{P}_t)_{t\in [0,T]}$ given by
\begin{align*}
\tilde{P}_t = P_t + K (N^+ - N^-), \quad t \in [0,T].
\end{align*}
This process still satisfies assumptions (i)-(ii) stated in Theorem~\ref{Thm2}, and the derivative of the associated Laplace exponents will necessarily satisfy (\ref{infinityAssumption}) by the structure in (\ref{psiPrime}), since $\tilde{P}_t$ has a L\'{e}vy measure with mass on both $(-\infty,0)$ and $(0,\infty)$. Moreover, the inequality 
\begin{align*}
\mathbb{P}\big(N^+=0\big)^{-2}\sup_{t\in [0,T]} \mathbb{E}\left[e^{\varepsilon \vert \tilde{P}_t\vert \log (1+ \vert \tilde{P}_t \vert)} \right] \geq \sup_{t \in [0,T]} \mathbb{E}\left[e^{\varepsilon \vert P_t \vert \log (1+ \vert P_t \vert)} \right]
\end{align*}
implies that it suffices to show (\ref{finiteExpect}) for $(\tilde{P}_t)_{t\in [0,T]}$. Thus, we will continue under the assumption that (\ref{infinityAssumption}) holds.

Next, by \cite[Lemma~26.4]{Sato} we may find a constant $\xi_0 >0$ such that for any $t$, the inverse of $\Psi_t'$, denoted by $\theta_t$, exists on $(\xi_0,\infty)$ and
\begin{align}\label{inverseBound}
\mathbb{P}(P_t \geq x) \leq \exp\left\{-\int_{\xi_0}^x \theta_t(\xi) d \xi \right\}
\end{align}
for any $x> \xi_0$. Since $\lim_{\xi \to \infty}\theta_t (\xi) = \infty$ and $K-1/\varepsilon' <0$ for $\varepsilon'\in (\varepsilon,1/K)$, it follows by (\ref{uniformBound}) that $\lim_{\xi \to \infty}\xi e^{-\theta_t (\xi)/\varepsilon'} = 0$. In particular by (\ref{uniformBound}) once again, we can choose a $\xi_1\geq\xi_0$ (independent of $t$) such that $-\theta_t(\xi) \leq - \varepsilon'\log \xi$ for every $\xi \geq \xi_1$. Combining this fact with (\ref{inverseBound}) gives that
\begin{align*}
\mathbb{P}(P_t\geq x)\leq \exp\left\{-\varepsilon' \int_{\xi_1}^x \log \xi d \xi \right\} \leq \tilde{C} e^{-\varepsilon'x (\log x - 1)}
\end{align*}
for $x >\xi_1$ and $t \in [0,T]$, where $\tilde{C}$ is some constant independent of $t$. By estimating the probability $\mathbb{P}(P_t \leq -x) = \mathbb{P}(-P_t\geq x)$ in a similar way it follows that $\xi_1$ and $\tilde{C}$ can be chosen large enough to ensure that
\begin{align}\label{GEstimate}
G(x):=\sup_{t \in [0,T]} \mathbb{P}(\vert P_t \vert \geq x) \leq \tilde{C} e^{-\varepsilon' x \log x}
\end{align}
for all $t \in [0,T]$ and $x \geq \xi$. If $G_t(x) = \mathbb{P}(\vert P_t \vert \geq x)$, $x \geq 0$, we get
\begin{align*}
\mathbb{E}\left[e^{\varepsilon \vert P_t \vert \log(1+\vert P_t\vert)}\right] &= -\int_0^\infty e^{\varepsilon x \log (1+x)}dG_t(x) \\
&= 1+ \varepsilon \int_0^\infty e^{\varepsilon x \log (1+x)}\left(\log(1+x) + \frac{x}{1+x} \right)G_t(x)dx 
\end{align*}
using integration by parts, and this implies in turn that
\begin{align*}
\sup_{t \in [0,T]}\mathbb{E}\left[e^{\varepsilon \vert P_t \vert \log(1+ \vert P_t \vert)} \right] \leq 1+ \varepsilon  \int_0^\infty e^{\varepsilon x \log (1+ x)} \left(\log (1+x)+ 1 \right) G(x) dx< \infty
\end{align*}
by (\ref{GEstimate}). Consequently, we have shown that (\ref{finiteExpect}) does indeed hold. 

\emph{Step 2:}
 Arguing that $W\ast (\mu - \nu)_t$, $t \in [0,T]$, is well-defined amounts to showing that $\mathbb{E}[\vert W\vert \ast \nu_T] <\infty$. This is clearly the case since by (\ref{Wdominated}),
\begin{align*}
\mathbb{E}[ W \ast \nu_T] \leq T \sup_{t\in [0,T]} \mathbb{E}[\vert P_t \vert]\int_\mathbb{R} g(x) F(dx),
\end{align*}
and the right-hand side is finite by (\ref{finiteExpect}).

By definition we have the equality
\begin{align*}
f(W)\ast \mu_t = \sum_{s \leq t} f(\Delta (W\ast (\mu - \nu))_s), \quad t \in [0,T],
\end{align*}
and the compensator of the process exists and is given as $\tilde{A} = f(W)\ast \nu$, since 
\begin{align*}
\mathbb{E}[\tilde{A}_T] \leq  &T\biggr[ \Big(1+ \sup_{t\in [0,T]} \mathbb{E}[\vert P_t\vert] \Big)\int_\mathbb{R} g(x)\log(1+ g(x))F(dx) \\
+ &\sup_{t\in [0,T]}\mathbb{E}[\vert P_t \vert \log(1+\vert P_t\vert)]\int_\mathbb{R}g(x)F(dx)\biggr],
\end{align*} 
which is finite by assumption (b) and (\ref{finiteExpect}).

In the following we will argue that (\ref{stepwiseFinite}) in Lemma~\ref{localLM} is satisfied for $\tau_k \equiv t_k$, $k=0,1,\dots,n$, for suitable numbers $0=t_0<t_1<\cdots <t_n=T$, which subsequently allows us to conclude that $\mathscr{E}(W\ast (\mu - \nu))$ is a martingale. Fix $0\leq s < t \leq T$ and note that (\ref{Wdominated}) implies
\begin{align}\label{compensatorBound}
\tilde{A}_t  -\tilde{A}_s \leq \int_s^t \int_\mathbb{R} f(\vert P_u\vert g(x)) F(dx)du,
\end{align}
since $f$ is increasing on $\mathbb{R}_+$. We want to obtain a bound on $h(y) := \int_{\mathbb{R}} f(yg(x))F(dx)$ for $y \geq 0$. 
First note that $f(x) \leq x \log (1+x)$ for any $x \geq 0$ so that we obtain the estimate
\begin{align*}
\frac{h(y)}{y \log y} \leq  \int_\mathbb{R}  g(x) \left[1+  \frac{\log (1+ g(x))}{\log y}\right] F (dx)
\end{align*}
for $y > 1$. Consequently, due to assumption (b), we can apply Lebesgue's theorem on dominated convergence to deduce that 
\begin{align*}
\limsup_{y\to \infty} \frac{h(y)}{y\log(1+y)}< \gamma_1
\end{align*}
for some $\gamma_1 \in (0,\infty)$. By monotonicity of $h$ we may find $\gamma_2 \in (0,\infty)$ so that we obtain the bound $h(y) \leq \gamma_1 y \log (1+y) + \gamma_2$ for all $y \geq 0$. Thus for all $0\leq s < t \leq T$, we have established the estimate
\begin{align}\label{finalEstimate}
\int_s^t h(\vert P_u\vert ) du \leq \gamma_1\int_s^t \vert P_u\vert \log(1+\vert P_u\vert) du + \gamma_2 (t-s).
\end{align}
Now choose a partition $0=t_0<t_1<\cdots < t_n = T$ such that $t_k - t_{k-1}\leq \varepsilon/\gamma_1$, where $\varepsilon$ is a small number such that (\ref{finiteExpect}) holds. By (\ref{compensatorBound}) and (\ref{finalEstimate}) it follows by an application of Jensen's inequality and Tonelli's theorem that
\begin{align*}
e^{-\gamma_2 (t_k - t_{k-1})}\mathbb{E}\Big[e^{ \tilde{A}_{t_k} - \tilde{A}_{t_{k-1}}} \Big] &\leq \mathbb{E}\biggr[\exp \biggr\{\gamma_1 \int_{t_{k-1}}^{t_k} \vert P_t \vert \log(1+ \vert P_t \vert)dt \biggr\} \biggr] \\
&\leq  \frac{1}{t_k - t_{k-1}} \int_{t_{k-1}}^{t_k}\mathbb{E}\left[e^{\varepsilon \vert P_t \vert \log (1+ \vert P_t \vert )} \right] dt \\
&\leq \sup_{t\in [0,T]} \mathbb{E}\left[e^{\varepsilon \vert P_t \vert \log (1+ \vert P_t \vert )}\right],
\end{align*}
which is finite and thus, the proof is completed.
\end{proof}
\begin{remark}\label{relaxW}
It appears from (\ref{compensatorBound}) above that if $F(\mathbb{R})< \infty$ or $W(u,x)=0$ for $x \in (-\delta,\delta)$ with $\delta >0$, one may allow that $W$ takes values in $(-1,\infty)$ by assuming that $\vert W(t,x)\vert \leq \vert P_t\vert g(x)$ and replacing the inequality with 
\begin{align*}
\tilde{A}_t - \tilde{A}_s \leq  M (t-s) + \int_s^t \int_\mathbb{R} f(\vert P_u\vert g(x)) F(dx) du
\end{align*}
for a suitable $M>0$. From this point, one can complete the proof in the same way as above and get that $\mathscr{E}(W\ast (\mu - \nu))$ is a martingale.
\end{remark}
\begin{remark}\label{LMassumptions}
Note that there are other sets of assumptions that can be used to show Theorem~\ref{Thm2}, but they will not be superior to those suggested. Furthermore, the assumptions that we suggest are natural in order to formulate Theorem~\ref{Thm1} in a way which in turn is suited for proving Theorem~\ref{nonGaussianEMM} in the introduction. However, by adjusting the set of assumptions in Theorem~\ref{Thm2}, one may obtain similar adjusted versions of Theorem~\ref{Thm1} (see the discussion in Remark~\ref{thm1assump}). In the bullet points below we shortly point out which properties the assumptions should imply and suggest other choices as well. 
\begin{itemize}
\item The importance of (i)-(ii) is that they ensure (\ref{finiteExpect}) holds. Thus, it follows that one may replace these by $P_t \overset{\mathscr{D}}{=}P_0$ for $t \in [0,T]$ and $\mathbb{E}\big[e^{\varepsilon \vert P_0 \vert \log (1+ \vert P_0 \vert)} \big]< \infty$ for some $\varepsilon >0$.
\item Instead of assuming that $(P_t)_{t\in [0,T]}$ is a process satisfying (\ref{finiteExpect}) and $g + g\log(1+g) \in L^1 (F)$, one may do a similar proof under the assumptions that
\begin{align*}
\sup_{t \in [0,T]} \mathbb{E}\big[e^{\varepsilon \vert P_t\vert^\gamma}\big]<\infty
\end{align*}
and $g^\gamma \in L^1(F)$ for some $\varepsilon >0$ and $\gamma \in (1,2]$. In particular, one may allow for less integrability of $F$ around zero for the cost of more integrability of $(P_t)_{t \in [0,T]}$.
\end{itemize}
\end{remark}
Example~\ref{LMrelax} below shows that one cannot relax assumption (i) in Theorem~\ref{Thm2} and still apply (a localized version of) the approach by Lépingle and Mémin \cite{LepingleMemin}. Moreover, it appears that this approach cannot naturally be improved in the sense of obtaining a weaker condition than (\ref{finiteExpect}), in order to relax assumption (i).
\begin{example}\label{LMrelax}
Consider the case where $W(t,x) = \vert Y x\vert$ for some $\mathscr{F}_0$-measurable infinitely divisible random variable $Y$ with an associated L\'{e}vy measure which has unbounded support. Moreover, suppose that $\mathbb{E}[Y^2]<\infty$ and that $F$ is given such that (b) holds with $g(x) = \vert x \vert$. Then $W\ast (\mu - \nu)$ is well-defined and the compensator of $f(W)\ast \mu$ exists and is given by $f(W)\ast \nu$ (in the notation of (\ref{fFunction})). Following the same arguments as in the proof of Theorem~\ref{Thm2} we obtain that
\begin{align*}
f(W) \ast \nu_t &\geq  c_1 t Y\log(1+ Y) - c_2t
\end{align*}
for $t \in [0,T]$ and suitable $c_1,c_2 >0$. Consequently,
\begin{align*}
\mathbb{E}\Big[e^{f(W)\ast \nu_t} \Big] \geq \mathbb{E}\Big[e^{c_1 t Y \log (1+ Y)} \Big]e^{-c_2 t} = \infty
\end{align*}
for any $t>0$ by \cite[Theorem~26.1]{Sato}. Thus, Lemma~\ref{localLM} cannot be applied if we remove assumption (i) in Theorem~\ref{Thm2}.

Naturally, one can ask if we can find an alternative specification of $f$, that is, does it suffice that $\mathbb{E}[e^{\tilde{f}(W)\ast \nu_t}]< \infty$ for some other measurable function $\tilde{f}:(-1,\infty)\to \mathbb{R}_+$? The idea in the proof of \cite[Theorem~1~(Section~III)]{LepingleMemin} is build on the assumption that $\tilde{f}$ is a function with $(1-\lambda)\tilde{f}(x) \geq 1+ \lambda x - (1+x)^\lambda$ for all $x>-1$ and $\lambda \in (0,1)$. In particular, this requires that
\begin{align*}
\tilde{f}(x) \geq \lim_{\lambda \uparrow 1} \frac{1+ \lambda x - (1+x)^\lambda}{1-\lambda} = f(x)
\end{align*}
for any $x>-1$ and thus, any other candidate function will be (uniformly) worse than $f$.
\end{example}
Before proving Theorem~\ref{Thm1} we will need a small result, which is stated and proven in Lemma~\ref{nuSpec} below. The result may be well-known, however we have not been able to find any references. To a given adapted process $(M_t)_{t\in [0,T]}$ such that for $\omega \in \Omega$, $t \mapsto M_t(\omega)$ is a c\'{a}dl\'{a}g step function, we define its $n$-th jump time and size
\begin{align}\label{jumpNotation}
T_n = \inf \{t \in (T_{n-1},T)\ :\ \Delta M_t \neq 0\}\in (0,T]\quad \text{and}\quad 
Z_n = \Delta M_{T_n},
\end{align}
respectively, for $n\geq 1$. Here we set $T_0\equiv 0$ and $\inf \emptyset = T$.
\begin{lemma}\label{nuSpec} Assume that the jump measure $J$ of some càdlàg adapted process $(M_t)_{t\in [0,T]}$ has a predictable compensator $\rho$ of the form $\rho (dt,dx)= G_t(dx)dt$, where $(G_t(B))_{t \in [0,T]}$ is a predictable process for every $B \in \mathscr{B}(\mathbb{R})$ and $\lambda_t:=G_t(\mathbb{R})\in (0,\infty)$ for $t \in [0,T]$. Then, in the notation of (\ref{jumpNotation}), it holds that
\begin{align}\label{jumpDist}
\mathbb{P} (Z_n \in B \mid \mathscr{F}_{T_n-}) = \Phi_{T_n}(B) \quad \text{on}\quad \{T_n < T\},
\end{align}
for any $n \geq 1$ and $B \in \mathscr{B}(\mathbb{R})$, where $\Phi_t:= G_t/\lambda_t$.
\end{lemma}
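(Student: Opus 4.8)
The plan is to read off the conditional law of the mark $Z_n$ from the compensation formula for $J$ and $\rho$, by testing against predictable integrands that isolate the $n$-th jump. Recall that for any nonnegative predictable $W:\Omega\times[0,T]\times\mathbb{R}\to\mathbb{R}_+$ the compensator satisfies $\mathbb{E}[W\ast J_T]=\mathbb{E}[W\ast\rho_T]=\mathbb{E}\big[\int_0^T\!\int_\mathbb{R} W(s,x)G_s(dx)\,ds\big]$. I would fix $n\ge1$, a bounded measurable $\phi\ge0$, a time $t\ge0$ and a set $A\in\mathscr{F}_t$, and aim to establish $\mathbb{E}[\mathds{1}_A\mathds{1}_{\{T_n>t\}}\phi(Z_n)\mathds{1}_{\{T_n<T\}}]=\mathbb{E}[\mathds{1}_A\mathds{1}_{\{T_n>t\}}\Phi_{T_n}(\phi)\mathds{1}_{\{T_n<T\}}]$, where $\Phi_s(\phi):=\int_\mathbb{R}\phi\,d\Phi_s=\lambda_s^{-1}\int_\mathbb{R}\phi(x)G_s(dx)$.

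First I would construct the localizing integrand. Let $N_{s-}$ be the left-limit at $s$ of the counting process $N$ of the jumps of $M$; since $N_{\cdot-}$ is left-continuous and adapted it is predictable, and $\{N_{s-}=n-1\}=\{T_{n-1}<s\le T_n\}$, so $\mathds{1}_{(T_{n-1},T_n]}(s)$ is a predictable process. As $A\times(t,T]$ is a predictable set for $A\in\mathscr{F}_t$, the process $H_s:=\mathds{1}_A\mathds{1}_{(t,T]}(s)\mathds{1}_{(T_{n-1},T_n]}(s)$ is predictable and bounded. I would also note that $s\mapsto\Phi_s(\phi)$ is predictable: for $\phi=\mathds{1}_B$ it is the ratio $G_s(B)/\lambda_s$ of predictable processes with $\lambda_s\in(0,\infty)$, and the general bounded measurable case follows by a monotone-class argument; consequently $\Phi_{T_n}(\phi)$ is $\mathscr{F}_{T_n-}$-measurable.

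The crux is a comparison of two test functions. Set $W_1(s,x):=H_s\,\phi(x)$ and $W_2(s,x):=H_s\,\Phi_s(\phi)$ (the latter not depending on $x$); both are nonnegative and predictable. The key observation is that their $\rho$-integrals coincide \emph{pathwise}, since $\int_\mathbb{R} W_2(s,x)G_s(dx)=H_s\,\Phi_s(\phi)\lambda_s=H_s\int_\mathbb{R}\phi\,dG_s=\int_\mathbb{R} W_1(s,x)G_s(dx)$ for every $s$, whence $W_1\ast\rho_T=W_2\ast\rho_T$ identically. Applying the compensation formula to each of $W_1,W_2$ (the jump-side being bounded by $\|\phi\|_\infty$, hence finite) yields $\mathbb{E}[W_1\ast J_T]=\mathbb{E}[W_2\ast J_T]$. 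Now $W\ast J_T=\sum_m W(T_m,Z_m)\mathds{1}_{\{T_m<T\}}$, and the factor $\mathds{1}_{(T_{n-1},T_n]}(T_m)$ in $H$ vanishes unless $m=n$ (the jump times being strictly increasing); since $W_2$ is free of the mark, $W_2(T_n,Z_n)=H_{T_n}\Phi_{T_n}(\phi)$. This is precisely the displayed identity above.

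Finally I would upgrade this to the conditional statement. The identity holds for all $t\ge0$ and $A\in\mathscr{F}_t$ (and for $A\in\mathscr{F}_0$ at $t=0$); the sets $A\cap\{t<T_n\}$ form a $\pi$-system generating $\mathscr{F}_{T_n-}$, so a monotone-class argument gives $\mathbb{E}[\mathds{1}_C\phi(Z_n)\mathds{1}_{\{T_n<T\}}]=\mathbb{E}[\mathds{1}_C\Phi_{T_n}(\phi)\mathds{1}_{\{T_n<T\}}]$ for every $C\in\mathscr{F}_{T_n-}$. Since $\Phi_{T_n}(\phi)\mathds{1}_{\{T_n<T\}}$ is $\mathscr{F}_{T_n-}$-measurable, this means $\mathbb{E}[\phi(Z_n)\mathds{1}_{\{T_n<T\}}\mid\mathscr{F}_{T_n-}]=\Phi_{T_n}(\phi)\mathds{1}_{\{T_n<T\}}$; specializing to $\phi=\mathds{1}_B$ and restricting to $\{T_n<T\}$ yields (\ref{jumpDist}). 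I expect the main obstacle to be the predictability bookkeeping — checking that $\mathds{1}_{(T_{n-1},T_n]}$ is predictable via the left-continuous counting process and that $\Phi_{T_n}(\phi)$ is $\mathscr{F}_{T_n-}$-measurable — together with the decisive device of replacing the mark-dependent $W_1$ by the mark-free $W_2$ with the same compensator, which is what converts the averaged intensity into the pointwise conditional law without any limiting argument.
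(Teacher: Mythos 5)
Your proof is correct and follows essentially the same route as the paper's: both test the compensation formula $\mathbb{E}[W\ast J_T]=\mathbb{E}[W\ast\rho_T]$ against a mark-dependent predictable integrand and a mark-free one built from $\Phi_s$, observe that their $\rho$-integrals coincide pathwise, and conclude via the $\pi$-system of sets $A\cap\{t<T_n\}$ generating $\mathscr{F}_{T_n-}$. The only cosmetic difference is that the paper's integrands carry an extra factor $\mathds{1}_{(0,T)}(s)$ excluding a possible jump at time $T$ (harmless in your version, since $\rho(dt,dx)=G_t(dx)dt$ has no time-atoms, so such a jump is a.s.\ absent), and you replace the paper's case split on $\{T_{n-1}\leq t\}$ by the single interval intersection $\mathds{1}_{(t,T]}(s)\mathds{1}_{(T_{n-1},T_n]}(s)$.
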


\begin{proof}
To show (\ref{jumpDist}), fix $n \geq 1$ and $B \in \mathscr{B}(\mathbb{R})$. Note that $\mathscr{F}_{T_n-}$ is generated by sets of the form $A \cap \{t<T_n\}$ for $t \in [0,T)$ and $A \in \mathscr{F}_t$. Consequently, it suffices to argue that
\begin{align}\label{charRelation}
\mathbb{E}\left[\mathds{1}_{A \cap \{t<T_n < T\}}  \mathds{1}_B(Z_n) \right] =  \mathbb{E}\left[\mathds{1}_{A \cap \{t<T_n<T\}} \Phi_{T_n}(B) \right].
\end{align}
Define the functions $\phi,\psi: \Omega\times [0,T]\times \mathbb{R} \to \mathbb{R}$ by
\begin{align*}
\phi (s,x) = \mathds{1}_{A \cap \{t<T_n \}} \left[\mathds{1}_{\{T_{n-1}\leq t\}} \mathds{1}_{(t,T_n] \times B}(s,x) + \mathds{1}_{\{ T_{n-1}>t\}}\mathds{1}_{(T_{n-1},T_n] \times B}(s,x)\right] \mathds{1}_{(0,T)}(s)
\end{align*}
and
\begin{align*}
\psi (s,x) =\mathds{1}_{A \cap \{t<T_n \}}  \Phi_s (B)\left[\mathds{1}_{\{T_{n-1}\leq t\}} \mathds{1}_{(t,T_n]}(s) + \mathds{1}_{\{T_{n-1}>t\}}\mathds{1}_{(T_{n-1},T_n]}(s)\right]\mathds{1}_{(0,T)}(s),
\end{align*}
and note that they are both predictable. Furthermore, we observe that the functions are defined such that 
\begin{align*}
\phi \ast J_T = \mathds{1}_{A \cap \{t<T_n < T\}}\mathds{1}_B(Z_n), \quad
\psi \ast J_T = \mathds{1}_{A\cap \{t<T_n<T\}} \Phi_{T_n}(B),
\end{align*}
and
\begin{align*}
\phi \ast \rho_T &= \mathds{1}_{A \cap \{t<T_n<T\}} \int_0^T G_s(B)\left[\mathds{1}_{\{T_{n-1}\leq t\}} \mathds{1}_{(t,T_n]}(s) + \mathds{1}_{\{T_{n-1}>t\}} \mathds{1}_{(T_{n-1},T_n]}(s)\right]ds \\
&= \psi\ast \rho_T .
\end{align*}
Using these properties together with the dual relations $\mathbb{E}[\phi \ast J_T] = \mathbb{E}[\phi \ast \rho_T]$ and $\mathbb{E}[\psi \ast \rho_T] = \mathbb{E}[\psi \ast \rho_T]$ we get that (\ref{charRelation}) holds, and this gives the result. 
\end{proof}
Using Lemma~\ref{nuSpec} it follows by a Monotone Class argument that on $\{T_n < T \}$,
\begin{align}\label{condGen}
\mathbb{E} \left[g(Z_n) \mid \mathscr{F}_{T_n-} \right] = \int_{\mathbb{R}} g(x) \Phi_{T_n}(dx)
\end{align}
for any function $g:\Omega \times \mathbb{R} \to \mathbb{R}_+$ which is $\mathscr{F}_{T_n-} \otimes \mathscr{B}(\mathbb{R})$-measurable. With this fact and Theorem~\ref{Thm2} in hand, we are ready to prove Theorem~\ref{Thm1}.
\begin{proof}[Proof of Theorem~\ref{Thm1}]
We prove the result depending on the different hypotheses. In both cases the proof goes by arguing that $\mathscr{E}((\alpha - 1)\ast (\mu - \nu))$ is a martingale and that the probability measure $\mathbb{Q}$ defined by $d\mathbb{Q} = \mathscr{E}((\alpha - 1)\ast (\mu-\nu))_Td\mathbb{P}$ is an EMM for $(X_t)_{t\in [0,T]}$. Since the differential characteristics of $(L_t)_{t\in [0,T]}$ under $\mathbb{P}$ coincide with its characteristic triplet $(c,F,b^h)$, it follows directly from (\ref{GirsanovF}) that if $\mathbb{Q}$ is a probability measure, the differential characteristics of $(X_t)_{t \in [0,T]}$ under $\mathbb{Q}$ are given as in (\ref{characteristics}). In the following we have fixed $a,b>0$ such that (\ref{truncation}) holds.

\emph{Case (h1):}
Consider the specific predictable function $\alpha$ given by (\ref{alphaH1}). Then $\alpha (t,x) \geq 1$ and in particular, $\mathscr{E}((\alpha - 1)\ast (\mu - \nu))_t>0$. Moreover, $\alpha(t,x) - 1 \leq \vert P_t \vert g(x)$ with $P_t =  Y_t + \xi$ and $g(x)=C\mathds{1}_{(a,b)}(\vert x \vert)\vert x \vert$ for some constant $C>0$. Since $\xi$ is just a constant, $(P_t)_{t\in [0,T]}$ inherits the properties in (h1) of $(Y_t)_{t\in [0,T]}$, thus (a) in Theorem~\ref{Thm2} is satisfied. Likewise,
\begin{align*}
\int_\mathbb{R} g[1+ \log(1+ g)]dF = \int_{\{\vert x \vert \in (a,b)\}} C\vert x \vert [1 + \log (1+C\vert x \vert)]F(dx) <\infty,
\end{align*}
showing that (b) is satisfied as well, and we conclude by Theorem~\ref{Thm2} that $\mathscr{E}((\alpha - 1)\ast (\mu - \nu))$ is a martingale. To argue that $(X_t)_{t\in [0,T]}$ is a local martingale under the associated probability measure $\mathbb{Q}$, note that it suffices to show that
\begin{align*}
\int_{\{\vert x \vert \in (a,b)\}} x \alpha(t,x) F(dx) = -(Y_t + b^h)
\end{align*}
by (\ref{characteristics}) and (\ref{measureAffect}) in Remark~\ref{charAnalysis}, where $b^h \in \mathbb{R}$ is the drift component in the characteristic triplet of $L$ with respect to the (pseudo) truncation function $h(x) = x \mathds{1}_{(a,b)^c}(\vert x\vert)$. Thus, we compute
\begin{align*}
\MoveEqLeft \int_{\{\vert x \vert \in (a,b)\}} x \alpha (t,x) F(dx) \\
&= \int_{\{\vert x \vert \in (a,b)\}}xF(dx) +  \frac{(Y_t + \xi)^-}{\sigma_+^2} \int_{(a,b)}x^2F(dx) - \frac{(Y_t+\xi)^+}{\sigma_-^2} \int_{(-b,-a)} x^2F(dx) \\
&=\int_{\{\vert x \vert \in (a,b)\}}  xF(dx) - (Y_t + \xi ) \\
&= \int_{\{\vert x \vert \in (a,b)\}}  xF(dx) - Y_t - \int_{\{\vert x \vert \in (a,b)\}}xF(dx) - b^h \\
&= - (Y_t + b^h),
\end{align*}
and the result is shown under hypothesis (h1).

\emph{Case (h2):}
Set $F^a = F(\cdot \cap [-a,a]^c)$. Note that $F^a((-\infty, \zeta)),F^a((\zeta,\infty))>0$ for $\zeta\in \mathbb{R}$ by assumption, and this implies that we may find a strictly positive density $f_\zeta:\mathbb{R}\to (0,\infty)$ such that
\begin{align}\label{fZeta}
\int_{[-a,a]^c} \frac{f_\zeta (x)}{F^a(\mathbb{R})} F(dx) = 1 \quad \text{and}\quad \int_{[-a,a]^c} x\frac{f_\zeta(x)}{F^a (\mathbb{R})} F(dx) = \zeta.
\end{align}
To see this, assume that $X$ is a random variable on $(\Omega, \mathscr{F}, \mathbb{P})$ with $X \overset{\mathscr{D}}{=} F^a/F^a(\mathbb{R})$. Then, since $\mathbb{E}[X \mid X<\zeta]<\zeta < \mathbb{E}[X \mid X \geq \zeta]$, we may define
\begin{align}\label{lambdaZeta}
\lambda (\zeta)= \frac{\zeta - \mathbb{E}[X \mid X< \zeta]}{\mathbb{E}[X \mid X \geq \zeta] - \mathbb{E}[X\mid X < \zeta]} \in (0,1)
\end{align}
and
\begin{align*}
\varpi_\zeta (B) = (1-\lambda (\zeta)) \mathbb{P}(X \in B \mid X < \zeta) + \lambda (\zeta) \mathbb{P}(X \in B \mid X \geq \zeta)
\end{align*}
for $B \in \mathscr{B}(\mathbb{R})$. We have that $\varpi_\zeta$ is a probability measure which is equivalent to $\mathbb{P}(X \in \cdot)=F^a/F^a(\mathbb{R})$ and has mean $\zeta$. Thus, the density $d\varpi_\zeta /d\mathbb{P}(X \in \cdot)$ is a function that satisfies (\ref{fZeta}). Moreover, such a density is explicitly given by
\begin{align*}
f_\zeta (x) = \frac{1-\lambda (\zeta)}{\mathbb{P}(X<\zeta)} \mathds{1}_{(-\infty,\zeta)}(x) + \frac{\lambda (\zeta)}{\mathbb{P}(X\geq \zeta)}\mathds{1}_{[\zeta,\infty)}(x),\quad x \in \mathbb{R},
\end{align*}
and we see that the map $(x, \zeta)\mapsto f_\zeta (x)$ is $\mathscr{B}(\mathbb{R}^2)$-measurable. By letting
\begin{align*}
\alpha (t,x) = f_{-(Y_t + b^h)/F^a(\mathbb{R})}(x)
\end{align*}
for $\vert x \vert >a$ and $\alpha (t,x) = 1$ for $\vert x \vert \leq a$, we obtain a predictable function $\alpha$, which is strictly positive and satisfies (\ref{alphaH2}).

Thus, it suffices to argue that an $\alpha$ with these properties defines an EMM for $(X_t)_{t\in [0,T]}$ through $\mathscr{E}((\alpha - 1)\ast(\mu - \nu))_T$. First observe that $(\alpha -1)\ast (\mu - \nu)$ is well-defined, since
\begin{align*}
\vert \alpha -1 \vert \ast \nu_T = \int_0^T \int_{[-a,a]^c} \vert \alpha (s,x) - 1\vert F(dx) ds \leq 2F^a(\mathbb{R})T.
\end{align*}
The properties (listed in (h2)) of $\alpha$ imply that $(\alpha - 1)\ast \nu_t=0$ for $t\in [0,T]$. Consequently,
\begin{align*}
\mathscr{E}((\alpha -1)\ast (\mu - \nu))_t=\mathscr{E}((\alpha - 1)\ast \mu )_t = e^{(\alpha - 1)\ast \mu_t + (\log \alpha - (\alpha -1))\ast \mu_t} = \prod_{n=1}^{N_t} \alpha (T_n,Z_n),
\end{align*}
where $(T_n,Z_n)_{n \geq 1}$ is defined as in (\ref{jumpNotation}) for the compound Poisson process $x \mathds{1}_{[-a,a]^c}(x) \ast \mu_t$, $t \in [0,T]$, and $N_t = \mathds{1}_{[-a,a]^c}(x)\ast\mu_t$, $t\in [0,T]$, is the underlying Poisson process that counts the number of jumps. In particular for $n \geq 1$, we have
\begin{align*}
\mathbb{E}\left[\mathscr{E}((\alpha - 1)\ast \mu)_{T_n} \mid \mathscr{F}_{T_{n-1}} \right] &= \mathscr{E}((\alpha - 1)\ast \mu)_{T_{n-1}}\mathbb{E}\left[\mathbb{E}\left[\alpha (T_n,Z_n)  \mid \mathscr{F}_{T_n-}\right] \mid \mathscr{F}_{T_{n-1}} \right]\\
&= \mathscr{E}((\alpha -1)\ast \mu)_{T_{n-1}}
\end{align*}
almost surely by the inclusion $\mathscr{F}_{T_{n-1}}\subseteq \mathscr{F}_{T_n-}$, if we can show that
\begin{align*}
\mathbb{E}\left[\alpha (T_n,Z_n) \mid \mathscr{F}_{T_n-} \right] = 1.
\end{align*}
(Here we recall that $T_0 \equiv 0$.) However, this follows from the observations that $\alpha (T_n,Z_n) = 1$ (since $Z_n=0$) almost surely on the set $\{T_n =T\}$ and on $\{T_n<T\}$,
\begin{align*}
\mathbb{E}[\alpha (T_n,Z_n) \mid \mathscr{F}_{T_n-}]= F^a(\mathbb{R})^{-1} \int_{[-a,a]^c} \alpha (T_n,x)F(dx) = 1
\end{align*}
almost surely by (\ref{alphaH2}) and (\ref{condGen}), since $(\omega, x)\mapsto \alpha(\omega, T_n(\omega),x)$ is $\mathscr{F}_{T_n-}\otimes \mathscr{B}(\mathbb{R})$-measurable. Consequently, $\mathscr{E}((\alpha -1)\ast \mu)_{T_n}$, $n \geq 0$, is a positive $\mathbb{P}$-martingale with respect to the filtration $(\mathscr{F}_{T_n})_{n \geq 0}$ and its mean value function equals $1$, so we may define a probability measure $\mathbb{Q}^n$ on $\mathscr{F}_{T_n}$ by $d\mathbb{Q}^n/d\mathbb{P} = \mathscr{E}((\alpha -1)\ast \mu)_{T_n}$ for $n \geq 1$. By (\ref{GirsanovF}) it follows that the compensator of $\mu$ under $\mathbb{Q}^n$ is $[\alpha(t,x)\mathds{1}_{\{t \leq T_n\}}+ \mathds{1}_{\{t>T_n\}}]F(dx)dt$. From this we get that the counting process $\mathds{1}_{[-a,a]^c}(x) \ast \mu_t$, $t \in [0,T]$, is compensated by
\begin{align*}
\int_0^t\int_\mathbb{R} \mathds{1}_{[-a,a]^c}(x) [\alpha(s,x)\mathds{1}_{\{s \leq T_n\}}+ \mathds{1}_{\{s>T_n\}}]F(dx)ds = F^a(\mathbb{R})t,\quad t \in [0,T],
\end{align*}
under $\mathbb{Q}^n$ using (\ref{alphaH2}). This shows that jumps continue to arrive according to a Poisson process with intensity $F^a(\mathbb{R})$, see e.g. \cite[Theorem 4.5 (Ch. II)]{JS}, which in turn implies that 
\begin{align*}
\mathbb{E}[\mathscr{E}((\alpha -1)\ast \mu)_{T_n}\mathds{1}_{\{T_n<T\}}] = \mathbb{Q}^n(T_n<T)= \mathbb{P}(T_n <T) \to 0
\end{align*}
as $n \to \infty$. As a consequence,
\begin{align*}
1 &= \lim_{n \to \infty} \mathbb{E}[\mathscr{E}((\alpha -1)\ast \mu)_{T_n}\mathds{1}_{\{T_n<T\}}] + \lim_{n \to \infty}\mathbb{E}[\mathscr{E}((\alpha -1)\ast \mu)_T\mathds{1}_{\{T_n= T\}}] \\
&=  \lim_{n \to \infty} \mathbb{E}[\mathscr{E}((\alpha -1)\ast \mu)_{T}\mathds{1}_{\{T_n= T\}}] \\
&= \mathbb{E}[\mathscr{E}((\alpha-1)\ast \mu)_T].
\end{align*}
This shows that $\mathbb{Q}$ defined by $d\mathbb{Q} = \mathscr{E}((\alpha -1)\ast \mu)_Td\mathbb{P}$ is a probability measure on $(\Omega,\mathscr{F})$. To show that $(X_t)_{t \in [0,T]}$ is a local martingale under $\mathbb{Q}$ we just observe that the compensator of $x\mathds{1}_{[-a,a]^c}(x) \ast \mu_t$, $t \in [0,T]$, is given by
\begin{align*}
\int_0^t \int_{[-a,a]^c} x \alpha (s,x)F(dx)ds = - \int_0^t (Y_s + b^h)ds, \quad t \in [0,T],
\end{align*}
according to (\ref{alphaH2}). Thus (\ref{measureAffect}) holds, and the proof is complete by Remark~\ref{charAnalysis}.
\end{proof}
Finally, we use Theorem~\ref{Thm1} to prove Theorem~\ref{nonGaussianEMM}, which was stated in the introduction.

\begin{proof}[Proof of Theorem~\ref{nonGaussianEMM}]
Suppose that $(X_t)_{t\in [0,T]}$ admits an EMM. Then, in particular, it is a semimartingale, and the assumptions imposed imply by \cite[Theorem~4.1~and~Corollary~4.8]{abocjr} that $\varphi$ is absolutely continuous with a density $\varphi'$ satisfying (\ref{densityCondition}). In addition, $(X_t)_{t\in [0,T]}$ has the decomposition
\begin{align}\label{canonicalMA}
X_t = X_0 + \varphi (0) L_t + \int_0^t Y_s ds, \quad t \in [0,T],
\end{align}
where $Y_t = \int_{-\infty}^t \varphi'(t-s)dL_s$. Note that, according to \cite{Cohn} and \cite[Theorem~2.28~(Ch.~I)]{JS}, we may choose $(Y_t)_{t\in [0,T]}$ predictable. From this representation we find that $\varphi (0) \neq 0$, since otherwise an EMM for $(X_t)_{t\in [0,T]}$ would imply $\varphi \equiv 0$. 

Conversely, if $\varphi$ has $\varphi (0) \neq 0$, is absolutely continuous, and the density $\varphi'$ meets (\ref{densityCondition}), we get from \cite[Theorem~4.1~and~Corollary~4.8]{abocjr} that $(X_t)_{t\in [0,T]}$ is a semimartingale that takes the form (\ref{canonicalMA}). Since (\ref{fSupp}) holds, hypothesis (h2) of Theorem~\ref{Thm1} holds for $(\varphi(0) L_t)_{t\in [0,T]}$, and we deduce the existence of an EMM for $(X_t)_{t\in [0,T]}$. Suppose now that (\ref{fSupp}) is not satisfied, but the support of $F$ is bounded and not contained in $(-\infty,0)$ nor in $(0,\infty)$, and the density $\varphi'$ is bounded. Then we observe initially that by \cite{Rosinski_spec}, $(Y_t)_{t\in [0,T]}$ is a stationary process, in particular it is tight, under $\mathbb{P}$, and the law of $Y_0$ is infinitely divisible with Lévy measure given by
\begin{align*}
F^Y(B) = \left(F \times \text{Leb} \right) (\{(x,s) \in \mathbb{R} \times (0, \infty)\ :\ x \varphi' (s) \in B\setminus \{0\}\}),  \quad B \in \mathscr{B}(\mathbb{R}).
\end{align*}
Here Leb denotes the Lebesgue measure on $(0,\infty)$ and $F$ is the L\'{e}vy measure of $(L_t)_{t\in \mathbb{R}}$. In particular, if $C>0$ is a constant that bounds $\varphi'$, we get the inequality
\begin{align*}
F^Y ([-M,M]^c) \leq (F \times \text{Leb})\left(\left[-\tfrac{M}{C},\tfrac{M}{C}\right]^c \times (0,\infty) \right)
\end{align*}
for any $M>0$, and this shows that the Lévy measure of $Y_0$ is compactly supported since the same holds for $F$. In this case (h1) of Theorem~\ref{Thm1} holds, thus we can conclude that an EMM for $(X_t)_{t \in [0,T]}$ exists.
\end{proof}

\begin{remark}
A natural comment is on the existence of $a,b>0$ with the property (\ref{truncation}). In light of the structure of the EMM presented in Theorem~\ref{Thm1}, discussed in Remark~\ref{charAnalysis}, this assumption seems very natural. Indeed, assume that the triplet of $L$ is given relative to the truncation function $h(x) = x\mathds{1}_{[-a,a]}(x)$ and set $\tilde{Y}_t = Y_t + b^h$. Then, according to (\ref{measureAffect}), we try to find $\mathbb{Q}$ under which
\begin{align}\label{pmBalance}
\MoveEqLeft x \mathds{1}_{\{\vert x \vert >a\}}\ast \mu_t + \int_0^t \tilde{Y}_s ds \notag \\  &= \biggr[x\mathds{1}_{\{x > a\}} \ast \mu_t - \int_0^t \tilde{Y}_s^- ds \biggr] - 
\biggr[\vert x \vert\mathds{1}_{\{x<-a\}} \ast \mu_t - \int_0^t \tilde{Y}_s^+ ds \biggr],
\end{align}
$t \in [0,T]$, is a local martingale. Intuitively, $\mathbb{Q}$ should ensure that positive jumps are compensated by the negative drift part and vice versa. Clearly, this construction is not possible if (\ref{truncation}) does not hold for any $a,b>0$ and all jumps are of same sign. If this holds, the construction of $\mathbb{Q}$, if possible, may become rather case specific. For instance, if all jumps of $L$ are positive, one may still make the desired change of measure under (h1) or under the hypothesis that $F$ has unbounded support on $(0,\infty)$, provided that the second term in (\ref{pmBalance}) is not present. Even in the case where the term $\int_0^t (Y_s + b^h)^+ds$, $t \in [0,T]$, is non-zero, it might possibly be absorbed by a change of drift of the Gaussian component in $L$ (if it exists).
\end{remark}
\subsection*{Acknowledgments}
This work was supported by the Danish Council for Independent Research (Grant DFF - 4002 - 00003).
\bibliographystyle{chicago}

\end{document}